\documentclass[11pt]{amsart}

\usepackage[a4paper,margin=1in]{geometry}
\usepackage{amsmath,amssymb,amsthm,mathtools}
\usepackage{mathrsfs}
\usepackage{enumitem}
\usepackage{hyperref}
\usepackage{xcolor}

\hypersetup{
  colorlinks=true,
  linkcolor=blue!50!black,
  citecolor=blue!50!black,
  urlcolor=blue!50!black
}

\numberwithin{equation}{section}

\newtheorem{theorem}{Theorem}[section]
\newtheorem{lemma}[theorem]{Lemma}
\newtheorem{proposition}[theorem]{Proposition}
\newtheorem{corollary}[theorem]{Corollary}
\newtheorem{remark}[theorem]{Remark}
\newtheorem{definition}[theorem]{Definition}
\newtheorem{example}[theorem]{Example}

\newcommand{\K}{\mathbb{K}}
\newcommand{\E}{\mathbf{E}}
\newcommand{\RB}{\mathrm{RB}}
\newcommand{\LRB}{\mathrm{LRB}}

\newcommand{\Der}{\operatorname{Der}}
\newcommand{\Aut}{\operatorname{Aut}}
\newcommand{\End}{\operatorname{End}}
\newcommand{\id}{\operatorname{id}}

\title[Local Rota--Baxter operators of weight zero]{Local Rota--Baxter operators of weight zero on nilpotent evolution algebras with maximal nilindex}

\author{Izzat Qaralleh}
\address{Izzat Qaralleh\\
Department of Mathematics\\
Faculty of Science, Tafila Technical
University\\
Tafila, Jordan}
\email{{\tt izzat\_math@yahoo.com}}

\author{Farrukh Mukhamedov}
\address{Farrukh Mukhamedov\\
 Department of Mathematical Sciences\\
College of Science, The United Arab Emirates University\\
P.O. Box, 15551, Al Ain\\
Abu Dhabi, UAE} \email{{\tt far75m@gmail.com} {\tt
farrukh.m@uaeu.ac.ae}}

\author{Otabek Khakimov}
\address{Otabek Khakimov\\
V.I.Romanovskiy Institute  of Mathematics,  Uzbekistan Academy of Sciences\\
Tashkent, Uzbekistan}
\email{{\tt khakimovo86@gmail.com}}

\date{\today}

\begin{document}

\begin{abstract}
Let $\E$ be an $n$-dimensional nilpotent evolution algebra of maximal nilindex over a field of characteristic zero. The Rota--Baxter operators of weights zero and one on such algebras were recently classified. In this paper we investigate the local analogue of the weight-zero case. We prove that every local Rota--Baxter operator of weight zero has a rigid diagonal part: the nonzero diagonal coefficients, when they occur, begin after the obstruction index determined by the structural matrix of $\E$ and form a final geometric string governed by one scalar. In contrast, the last-column coefficients are arbitrary. This gives an explicit description of the class $\LRB_0(\E)$ and shows that it is generally strictly larger than $\RB_0(\E)$. We also prove that the corresponding $s$-local notion produces no new operators. The classification is further interpreted as a finite union of quasi-affine strata, yielding a dimension comparison between $\LRB_0(\E)$ and $\RB_0(\E)$. Finally, we study ordinary weight-zero Rota--Baxter operators commuting with derivations and automorphisms and describe the resulting conditions in terms of the directed graph associated with $\E$.
\end{abstract}

\keywords{Evolution algebra, nilpotent evolution algebra, maximal nilindex, Rota--Baxter operator, local Rota--Baxter operator}
\subjclass[2020]{17A60, 17A36, 47H06, 47B39}

\maketitle

\section{Introduction}

Rota--Baxter operators were introduced by Baxter in probability theory \cite{Baxter} and were soon developed from several complementary points of view by Atkinson and Rota \cite{Atkinson,RotaI,RotaII}.  They are now regarded as an algebraic abstraction of integral-type operators: in weight zero the defining identity is the algebraic form of integration by parts.  This simple principle has proved remarkably robust.  Rota--Baxter operators occur in algebraic combinatorics, dendriform and pre-Lie type structures, the algebraic Birkhoff decomposition in perturbative quantum field theory, Yang--Baxter type equations, post-Lie theory, group and Lie-theoretic integration problems, and, more recently, vertex operator algebra theory; see, for instance, \cite{Aguiar,BaiGuoLiuWang,BaiGuoNi,BaiGuoShengTang,EbrahimiFardGuoKreimer,Guo,GuoLangSheng}.  This breadth explains the continuing actuality of the subject: Rota--Baxter operators provide a common operator-theoretic language for splitting, factorization, and integration phenomena across different algebraic categories.
Recently, in \cite{Bar22,Bar23} it has been established a connection between skew braces and Rota--Baxter groups.  

Evolution algebras form another active class of nonassociative algebras.  They were introduced in connection with non-Mendelian genetics and stochastic processes \cite{TianVojtechovsky,TianBook}; their defining feature is the existence of a natural basis $\{{\bf e}_1,\ldots,{\bf e}_n\}$ for which distinct basis vectors multiply to zero.  This special multiplication rule makes evolution algebras simultaneously concrete and subtle.  On the one hand, many structural questions can be translated into matrix or directed-graph language.  On the other hand, standard tools from associative or Lie theory are not directly available.  Consequently, questions about nilpotency, ideals, derivations, automorphisms, and classification have generated a substantial literature; see, among others, \cite{CasasLadraOmirovRozikov,ElduqueLabra,ElduqueLabraNilpotent,ElduqueLabraAutGraphsHegaziAbdelwahab,MKQ20,DerAut}.

Nilpotent evolution algebras of maximal nilindex occupy an extremal position in this theory \cite{CasasLadraOmirovRozikov,MKQ24}.  Their natural basis can be chosen in a triangular normal form, so their multiplication consists of a directed chain together with possible higher jumps.  This makes them a particularly suitable testing ground for explicit operator classifications: the algebra is rigid enough to allow complete formulas, but still rich enough for the structural constants and the associated directed graph to influence the answer.  In particular, the obstruction index $i_0$ appearing below measures the last place where a higher structural coefficient occurs, and it is precisely this index that controls the possible onset of nonzero diagonal coefficients of Rota--Baxter operators \cite{QMK-RB}.

The ordinary Rota--Baxter operators of weights $0$ and $1$ on nilpotent evolution algebras of maximal nilindex were recently classified in \cite{QMK-RB}.  In weight zero the classification is especially rigid: an ordinary operator is upper triangular, all off-diagonal entries vanish except possibly those in the last column, and every nonzero diagonal part is a final geometric string determined by one scalar.  This rigidity is a strong global consequence of the quadratic Rota--Baxter identity.  It therefore leads naturally to the following locality problem: which parts of this rigidity remain valid if the Rota--Baxter identity is imposed only pointwise through ordinary Rota--Baxter witnesses? 

This question fits into the broader philosophy of local and $2$-local maps.  Local derivations and local automorphisms were initiated in operator-algebraic settings by Kadison and by Larson--Sourour \cite{Kadison,LarsonSourour}, and the $2$-local viewpoint was developed further by \v{S}emrl \cite{Semrl}; see also the survey \cite{AyupovKudaybergenovPeralta}.  The central theme in this area is to determine whether pointwise agreement with a prescribed class of operators forces a map to be global, or whether genuinely new local objects appear.  For Rota--Baxter operators this problem is particularly delicate, because the identity is nonlinear and involves the multiplication of images.  Thus the local version is not merely a formal weakening: it tests which constraints are forced by individual vectors and which constraints require simultaneous compatibility for all pairs of vectors.

The novelty of the present paper is to give a complete answer to this locality problem in the weight-zero case for nilpotent evolution algebras of maximal nilindex.  We prove that a local Rota--Baxter operator is not arbitrary: its diagonal part is as rigid as in the ordinary classification.  More precisely, if a nonzero diagonal part occurs, then it must start after the obstruction index $i_0$ and must have the form
\[
2^{n-i}\alpha
\]
on a final segment.  At the same time, the local theory differs genuinely from the ordinary theory: the last-column coefficients are free. Hence, the local class is usually strictly larger than the ordinary class,
\[
\RB_0(\E)\subsetneq \LRB_0(\E),
\]
and the difference is completely explicit.  This identifies exactly which part of the ordinary Rota--Baxter rigidity is local and which part is global.

A second contribution is the comparison between local and $s$-local versions of the same notion.  We show that the $s$-local condition produces no additional operators.  Thus, for this class of algebras and in weight zero, pointwise locality and finite-point locality lead to the same operator family.  This phenomenon is useful because it separates the genuinely new local freedom from artifacts of the definition.

The classification also has a geometric and parameter-space interpretation.  The set $\LRB_0(\E)$ decomposes into a finite union of strata: an affine annihilator-valued stratum and several quasi-affine strata indexed by the first basis vector at which the final diagonal string begins.  This yields a transparent dimension comparison between the ordinary and local spaces and shows that the local parameter space has one additional degree of freedom.  In this way the paper connects an operator identity with the algebraic geometry of parameter spaces.

Finally, we study ordinary weight-zero Rota--Baxter operators that commute with derivations and automorphisms of $\E$.  These results complement the local classification by describing how Rota--Baxter thresholds interact with the symmetries and infinitesimal symmetries of the algebra.  The resulting equations admit a graph-theoretic interpretation, in the spirit of the directed-graph approach to evolution algebras \cite{ElduqueLabra,ElduqueLabraAutGraphs}.  This provides an additional reason why the maximal-nilindex case is important: the directed graph records the obstruction index, the terminal annihilator, and the compatibility conditions for commuting operators in a single combinatorial picture.

The paper is organized as follows.  In Section~\ref{sec:preliminaries}, we recall the normal form of nilpotent evolution algebras of maximal nilindex, the relevant notation, and the ordinary weight-zero classification from \cite{QMK-RB}.  Section~\ref{sec:main} contains the local theory and the proof of the classification theorem.  In Section~\ref{sec:examples}, we give explicit low-dimensional examples, including local operators that are not ordinary Rota--Baxter operators.  Section~\ref{sec:parameter-space} describes the parameter-space structure of $\LRB_0(\E)$ and compares it with $\RB_0(\E)$.  Section~\ref{sec:commuting} studies commuting Rota--Baxter operators with derivations and automorphisms and gives a graph-theoretic interpretation of the resulting conditions.  The final section summarizes the main conclusions and indicates possible directions for further work.
\section{Preliminaries}\label{sec:preliminaries}

Throughout the paper, $\K$ denotes a field of characteristic zero. All vector spaces and algebras are taken over $\K$. We assume $n\ge 3$ throughout; this is the range in which the normal form below is used.

\subsection{Nilpotent evolution algebras of maximal nilindex}

We briefly recall the standard setup; see \cite{CasasLadraOmirovRozikov, HegaziAbdelwahab, QMK-RB}.

\begin{definition}
An algebra $\E$ over $\K$ is called an \emph{evolution algebra} if it admits a basis $\{{\bf e}_1,\dots,{\bf e}_n\}$ such that
\[
{\bf e}_i {\bf e}_j=0 \quad (i\neq j), \qquad
{\bf e}_i^2=\sum_{k=1}^n a_{ik} {\bf e}_k.
\]
Such a basis is called a \emph{natural basis}.
\end{definition}

We restrict attention to nilpotent evolution algebras of maximal nilindex. By Lemma~2.3 of \cite{QMK-RB}, such an algebra admits a natural basis $\{{\bf e}_1,\dots,{\bf e}_n\}$ satisfying
\begin{equation}\label{eq:natural-form}
{\bf e}_i^2=
\begin{cases}
{\bf e}_{i+1}+\displaystyle\sum_{j=i+2}^{n-1} a_{ij} {\bf e}_j, & 1\le i\le n-3,\\[1ex]
{\bf e}_{n-1}, & i=n-2,\\
{\bf e}_n, & i=n-1,\\
{\bf 0}, & i=n.
\end{cases}
\end{equation}
As in \cite{QMK-RB}, we write
\begin{equation}\label{eq:IA}
\mathcal I_A=\{(i,j): i+1<j<n,\ a_{ij}\neq 0\},
\end{equation}
and define
\[
i_0=
\begin{cases}
0, & \mathcal I_A=\varnothing,\\
\max\{i:(i,j)\in\mathcal I_A \text{ for some }j\}, & \mathcal I_A\neq\varnothing.
\end{cases}
\]

\subsection{Ordinary and local Rota--Baxter operators}

\begin{definition}
A linear map $R:\E\to \E$ is called a \emph{Rota--Baxter operator of weight $0$} if
\begin{equation}\label{eq:RB0}
R({\bf x})R({\bf y})=R(R({\bf x}){\bf y}+{\bf x}R({\bf y})), \qquad {\bf x},{\bf y}\in \E.
\end{equation}
We denote the set of all such operators by $\RB_0(\E)$.
\end{definition}

\begin{definition}
A linear map $T:\E\to \E$ is called a \emph{local Rota--Baxter operator of weight $0$} if for every ${\bf x}\in \E$ there exists an operator $R_{\bf x}\in \RB_0(\E)$ such that
\[
T({\bf x})=R_{\bf x}({\bf x}).
\]
We denote the set of all such operators by $\LRB_0(\E)$.
\end{definition}

Clearly,
\[
\RB_0(\E)\subseteq \LRB_0(\E).
\]

\subsection{Previously known weight-zero classification}

We now record the result from \cite[Theorem~4.5]{QMK-RB} that will be used throughout the paper.

\begin{theorem}\label{thm:ordinary-classification}
Let $\E$ be as in \eqref{eq:natural-form}. A linear map $R:\E\to \E$ belongs to $\RB_0(\E)$ if and only if exactly one of the following alternatives holds.
\begin{enumerate}[label=\textup{(\roman*)}]
\item The image of $R$ is contained in $\K{\bf e}_n$, equivalently there exist scalars $\beta_1,\dots,\beta_n\in\K$ such that
\[
R({\bf e}_i)=\beta_i{\bf e}_n\qquad (1\le i\le n).
\]
\item There exist an index $k\in\{i_0+1,\dots,n-1\}$, a scalar $\alpha\in\K^\times$, and scalars $\beta_1,\dots,\beta_k\in\K$ such that
\[
R({\bf e}_i)=
\begin{cases}
\beta_i {\bf e}_n, & 1\le i<k,\\
2^{n-k}\alpha\,{\bf e}_k+\beta_k {\bf e}_n, & i=k,\\
2^{n-i}\alpha\,{\bf e}_i, & k<i\le n.
\end{cases}
\]
\end{enumerate}
\end{theorem}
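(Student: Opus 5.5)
The statement is the classification of $\RB_0(\E)$ from \cite{QMK-RB}. I would prove it directly. Write $R({\bf e}_i)=\sum_k r_{ki}{\bf e}_k$ and test \eqref{eq:RB0} on pairs of basis vectors $({\bf e}_i,{\bf e}_j)$. The product rule ${\bf e}_p{\bf e}_q=\delta_{pq}{\bf e}_p^2$ makes both sides explicit:
\[
\sum_p r_{pi}r_{pj}{\bf e}_p^2=r_{ji}R({\bf e}_j^2)+r_{ij}R({\bf e}_i^2).
\]
Sufficiency is a direct check. In case (i), every product $R({\bf x})R({\bf y})$ lies in $\K{\bf e}_n^2=0$. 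The right-hand side involves $R({\bf e}_i^2)$ with coefficient $r_{ij}$ or $r_{ji}$, and these vanish unless the relevant index equals $n$, where ${\bf e}_n^2=0$. So both sides are zero.

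In case (ii), the only nontrivial pairs are $i=j\ge k$. There the identity reads
\[
(2^{n-i}\alpha)^2{\bf e}_i^2 = 2\cdot 2^{n-i}\alpha\, R({\bf e}_i^2).
\]
This requires $R({\bf e}_i^2)=2^{n-i-1}\alpha\,{\bf e}_i^2$. Because $i\ge k>i_0$, the vector ${\bf e}_i^2$ is just ${\bf e}_{i+1}$, on which $R$ acts by $2^{n-i-1}\alpha$, so the identity holds. The $\beta$-terms only produce multiples of ${\bf e}_n$, which annihilate everything and are killed when squared. One small check remains at $i=k$: the term $\beta_k{\bf e}_n$ does not interfere, because ${\bf e}_n{\bf e}_k=0$ for $k<n$.

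For necessity I would proceed in three steps.

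\textbf{Step 1: triangularity.} Take $i=j$, which gives
\[
\sum_p r_{pi}^2{\bf e}_p^2 = 2r_{ii}R({\bf e}_i^2).
\]
Then argue by downward induction on the lowest index $p<n$ with $r_{pi}\ne0$. The left side has a leading ${\bf e}_{p+1}$ term with coefficient $r_{pi}^2$, which the right side must match. Combined with the off-diagonal identities for $i\ne j$, this forces $r_{pi}=0$ for $p<i$. The same argument should also kill the off-diagonal entries with $i<p<n$.

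\textbf{Step 2: the diagonal.} Set $d_i=r_{ii}$. Compare the ${\bf e}_{i+1}$ coefficients in the $(i,i)$ identity:
\[
d_i^2=2d_id_{i+1}.
\]
So $d_i=0$ or $d_i=2d_{i+1}$. The key step is to show that once some $d_i\ne0$, every $d_j$ with $j>i$ is nonzero and equal to $2^{n-j}\alpha$. This follows from $d_i=2d_{i+1}$ and induction, normalized at $d_n=\alpha$.

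\textbf{Step 3: the threshold $k>i_0$.} Let $k$ be the first index with $d_k\ne0$. If $k\le i_0$, pick $(i,j)\in\mathcal I_A$ with $i\ge k$. The ${\bf e}_j$ coefficient of the $(i,i)$ identity gives
\[
d_i^2a_{ij}=2d_i\,a_{ij}d_j.
\]
Hence $d_i=2d_j=2^{\,j-i}d_i$. Since $j\ge i+2$, this forces $d_i=0$, a contradiction. Therefore $k\ge i_0+1$. Finally, $k\le n-1$ in case (ii), since $k=n$ would put the image of $R$ in $\K{\bf e}_n$, which is case (i).

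The main obstacle is Step 1. Eliminating the off-diagonal entries outside the last column needs careful bookkeeping of the mixed identities together with the triangular structure constants $a_{ij}$. I would organize it as an induction on $n-p$, treating row $p$ at each stage.
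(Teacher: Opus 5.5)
The paper does not prove this statement; it imports it from \cite[Theorem~4.5]{QMK-RB}. Your direct argument therefore has to stand on its own, and it does not yet.

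Your sufficiency check is correct. Steps~2 and~3 are also correct once every column is known to have the form $R({\bf e}_i)=d_i{\bf e}_i+c_i{\bf e}_n$. There is one slip there: $d_j=2^{i-j}d_i$, so $d_i=2d_j$ reads $d_i=2^{1-(j-i)}d_i$, which still forces $d_i=0$.

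\textbf{First gap: Step~1 is not proved.} You do not carry it out, and the mechanism you sketch does not cover the essential case.

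The leading-term argument in the $(i,i)$ identity works if you run a downward induction on the \emph{column} $i$, so that $R({\bf e}_i^2)\in\operatorname{span}\{{\bf e}_{i+1},\dots,{\bf e}_n\}$. It then kills $r_{pi}$ for $p<i$. It also kills $r_{pi}$ for $i<p<n$ when $r_{ii}=0$, because the right-hand side vanishes.

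But when $d_i=r_{ii}\neq0$, the lowest nonzero index is $i$ itself. The terms $r_{pi}^2{\bf e}_p^2$ with $p>i$ only contribute to ${\bf e}_{p+1}$ and beyond. There they can be balanced by the nonzero right-hand side $2d_iR({\bf e}_i^2)$, entangled with the $a_{ij}$ and the $d_m$. So ``the same argument'' gives nothing in this case.

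The missing idea is as follows. Once the columns $m>i$ are known to be $d_m{\bf e}_m+c_m{\bf e}_n$, the mixed identity for $({\bf e}_i,{\bf e}_j)$ with $i<j<n$ collapses to
\[
r_{ji}\bigl(d_j{\bf e}_j^2-R({\bf e}_j^2)\bigr)=0 .
\]
The $(j,j)$ identity gives $R({\bf e}_j^2)=\tfrac{d_j}{2}{\bf e}_j^2$ when $d_j\neq0$, hence $r_{ji}=0$ provided $d_j\ne0$.

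That $d_j\neq0$ for all $j>i$ follows from $d_i\neq0$ only through two facts. One is the recursion $d_i=2d_{i+1}$, read off the ${\bf e}_{i+1}$-coefficient of the $(i,i)$ identity; that coefficient is insensitive to the unknown $r_{pi}$ with $p>i$. The other is the final-segment structure already established for the higher columns. So Steps~1 and~2 cannot be run in sequence. They must form a single downward induction on $i$ that carries the full normal form of columns $i+1,\dots,n$.

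\textbf{Second gap: the last-column coefficients after the threshold.} Even granting Step~1, your necessity argument never shows $c_i=0$ for $k<i<n$, which is part of alternative~(ii). You compare ${\bf e}_{i+1}$- and ${\bf e}_j$-coefficients, but never ${\bf e}_n$-coefficients.

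For $k\le i\le n-2$, the square ${\bf e}_i^2$ has no ${\bf e}_n$-component in \eqref{eq:natural-form}. So the ${\bf e}_n$-coefficient of the $(i,i)$ identity is $0=2d_i\bigl(c_{i+1}+\sum_j a_{ij}c_j\bigr)$. Since $i>i_0$, all $a_{ij}$ vanish, which gives $c_{i+1}=0$.

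This is not cosmetic. These are exactly the coefficients that are free for local operators (Theorem~\ref{thm:main-local}, Corollary~\ref{cor:ordinary-inclusion}). Without this step your argument cannot separate $\RB_0(\E)$ from $\LRB_0(\E)$.
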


\begin{remark}\label{rem:ordinary-shape}
The theorem shows, in particular, that every ordinary weight-zero Rota--Baxter operator is upper triangular, and that for each basis vector ${\bf e}_i$ one has
\[
R({\bf e}_i)\in
\begin{cases}
\K {\bf e}_n, & i\leq i_0,\\
\K {\bf e}_i+\K {\bf e}_n, & i_0<i<n,\\
\K {\bf e}_n, & i=n.
\end{cases}
\]
Moreover, once a nonzero diagonal coefficient appears, all later diagonal coefficients are forced and are determined by the same scalar $\alpha$.
\end{remark}

\section{Local weight-zero Rota--Baxter operators}\label{sec:main}

In this section we determine $\LRB_0(\E)$ completely.

\begin{lemma}\label{lem:basis-images}
Let $T\in \LRB_0(\E)$. Then:
\begin{enumerate}[label=\textup{(\alph*)}]
\item for every $i\le i_0$, one has $T({\bf e}_i)\in \K {\bf e}_n$;
\item for every $i_0<i<n$, one has $T({\bf e}_i)\in \K {\bf e}_i+\K {\bf e}_n$;
\item one has $T({\bf e}_n)\in \K {\bf e}_n$.
\end{enumerate}
\end{lemma}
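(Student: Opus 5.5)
The plan is to apply the definition of local operator at the basis vectors themselves and then read off the answer from Remark~\ref{rem:ordinary-shape}. Fix an index $i\in\{1,\dots,n\}$. By definition of $\LRB_0(\E)$ there is an ordinary operator $R_{{\bf e}_i}\in\RB_0(\E)$ with $T({\bf e}_i)=R_{{\bf e}_i}({\bf e}_i)$. Since the conclusion of the lemma only concerns the value of $T$ at a single basis vector, it is enough to know where an arbitrary ordinary operator can send ${\bf e}_i$. That information is supplied by Theorem~\ref{thm:ordinary-classification}, summarized in Remark~\ref{rem:ordinary-shape}.

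Concretely, I would go through the two alternatives of Theorem~\ref{thm:ordinary-classification} for $R=R_{{\bf e}_i}$.

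In alternative (i), $R({\bf e}_i)=\beta_i{\bf e}_n\in\K{\bf e}_n$. This lies in every one of the three target subspaces.

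In alternative (ii), there is a threshold $k\in\{i_0+1,\dots,n-1\}$, and we compare $i$ with $k$:
\begin{itemize}
\item If $i<k$, then $R({\bf e}_i)=\beta_i{\bf e}_n$.
\item If $i=k$, then $R({\bf e}_i)=2^{n-k}\alpha{\bf e}_k+\beta_k{\bf e}_n\in\K{\bf e}_i+\K{\bf e}_n$. Here necessarily $i_0<i<n$.
\item If $k<i\le n$, then $R({\bf e}_i)=2^{n-i}\alpha{\bf e}_i$. Again $i>i_0$.
\end{itemize}

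These cases give the three parts of the lemma. If $i\le i_0$, then $i<k$ automatically, so only the ${\bf e}_n$-multiple case occurs, which gives (a). If $i_0<i<n$, every case lands in $\K{\bf e}_i+\K{\bf e}_n$, which gives (b). If $i=n$, every case gives a multiple of ${\bf e}_n$, since $2^{0}\alpha{\bf e}_n\in\K{\bf e}_n$; this gives (c).

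I do not expect a real obstacle here; the lemma is essentially a pointwise transcription of Remark~\ref{rem:ordinary-shape}. The only point requiring care is that the witness $R_{{\bf e}_i}$ depends on $i$. Different basis vectors may therefore use different alternatives, thresholds $k$ and scalars $\alpha$. This is harmless because each statement (a)--(c) is about one vector at a time. The substantive work, namely showing that the diagonal coefficients of $T$ obtained from different witnesses are nevertheless coherent (the $2^{n-i}\alpha$ pattern), must come later by testing $T$ on non-basis vectors such as sums ${\bf e}_i+{\bf e}_j$, and is not needed for this lemma.
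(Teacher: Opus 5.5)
Your proposal is correct and follows the paper's proof: you apply locality at each basis vector ${\bf e}_i$ and read off $R_{{\bf e}_i}({\bf e}_i)$ from Theorem~\ref{thm:ordinary-classification} by comparing $i$ with the threshold $k>i_0$. Your explicit treatment of the annihilator-valued alternative and of the case $i=n$ (where $2^{0}\alpha\,{\bf e}_n\in\K{\bf e}_n$) only spells out what the paper leaves implicit.
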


\begin{proof}
Fix $i\in\{1,\dots,n\}$. Since $T$ is local, there exists $R_{{\bf e}_i}\in \RB_0(\E)$ such that
\[
T({\bf e}_i)=R_{{\bf e}_i}({\bf e}_i).
\]
Now apply Theorem~\ref{thm:ordinary-classification} to the ordinary operator $R_{{\bf e}_i}$.

If $i\le i_0$, then the threshold $k$ from Theorem~\ref{thm:ordinary-classification} always satisfies $k>i_0\ge i$, hence $i<k$, and therefore
\[
R_{{\bf e}_i}({\bf e}_i)\in \K {\bf e}_n.
\]
Thus $T({\bf e}_i)\in \K {\bf e}_n$.

If $i_0<i<n$, then there are two possibilities in Theorem~\ref{thm:ordinary-classification}: either $i<k$, in which case $R_{{\bf e}_i}({\bf e}_i)\in \K {\bf e}_n$, or $i\ge k$, in which case
\[
R_{{\bf e}_i}({\bf e}_i)\in \K {\bf e}_i+\K {\bf e}_n.
\]
Hence $T({\bf e}_i)\in \K {\bf e}_i+\K {\bf e}_n$.

Finally, if $i=n$, then Theorem~\ref{thm:ordinary-classification} gives
\[
R_{{\bf e}_n}({\bf e}_n)\in \K {\bf e}_n.
\]
Therefore $T({\bf e}_n)\in \K {\bf e}_n$.
\end{proof}

Consequently, every local operator has matrix form
\begin{equation}\label{eq:T-shape-1}
T({\bf e}_i)=
\begin{cases}
c_i {\bf e}_n, & 1\le i\le i_0,\\
d_i {\bf e}_i+c_i {\bf e}_n, & i_0<i<n,\\
c_n {\bf e}_n, & i=n,
\end{cases}
\end{equation}
for suitable scalars $c_1,\dots,c_n,d_{i_0+1},\dots,d_{n-1}\in \K$.

The next result shows that the diagonal coefficients $\{d_i\}$ satisfy a very strong restriction.

\begin{proposition}\label{prop:final-segment}
Let $T\in \LRB_0(\E)$ and write $T$ as in \eqref{eq:T-shape-1}. Then the set
\[
J=\{i\in\{i_0+1,\dots,n-1\}: d_i\neq 0\}
\]
is either empty or of the form
\[
J=\{k,k+1,\dots,n-1\}
\]
for some $k\in\{i_0+1,\dots,n-1\}$.
\end{proposition}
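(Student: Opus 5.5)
The plan is to show that $J$ is upward closed inside $\{i_0+1,\dots,n-1\}$: if $i\in J$ and $i<j\le n-1$, then $j\in J$. Once this holds, $J$ is either empty or equal to $\{k,\dots,n-1\}$ with $k=\min J$. The tool is locality applied not to a single basis vector, as in Lemma~\ref{lem:basis-images}, but to the sum ${\bf x}={\bf e}_i+{\bf e}_j$. This forces a single ordinary witness to see both coordinates at once.

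Fix $i\in J$ and $j$ with $i<j\le n-1$. Since $j>i>i_0$, \eqref{eq:T-shape-1} gives
\[
T({\bf e}_i+{\bf e}_j)=d_i{\bf e}_i+d_j{\bf e}_j+(c_i+c_j){\bf e}_n .
\]
By locality there is $R\in\RB_0(\E)$ with $T({\bf e}_i+{\bf e}_j)=R({\bf e}_i)+R({\bf e}_j)$. We apply Theorem~\ref{thm:ordinary-classification} to $R$. Each $R({\bf e}_m)$ lies in $\K{\bf e}_m+\K{\bf e}_n$, and $i,j,n$ are pairwise distinct. Hence the ${\bf e}_i$-coordinate of $R({\bf e}_i+{\bf e}_j)$ is exactly the diagonal coefficient of $R$ at ${\bf e}_i$, and likewise for ${\bf e}_j$.

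First, comparing ${\bf e}_i$-coordinates gives that the diagonal coefficient of $R$ at ${\bf e}_i$ equals $d_i\neq0$. This excludes alternative (i), in which the image of $R$ lies in $\K{\bf e}_n$. So $R$ is of type (ii), with some threshold $k$ and some $\alpha\neq0$. In type (ii) the diagonal coefficient at ${\bf e}_i$ vanishes unless $i\ge k$, so $i\ge k$. Then $j>i\ge k$ with $j<n$, so the diagonal coefficient of $R$ at ${\bf e}_j$ is $2^{n-j}\alpha\neq0$, using characteristic zero. Comparing ${\bf e}_j$-coordinates then gives $d_j=2^{n-j}\alpha\neq0$, i.e. $j\in J$.

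The only step needing care is the coordinate bookkeeping: no ${\bf e}_i$ or ${\bf e}_j$ component can leak from the other summand or from the ${\bf e}_n$-terms. This follows directly from the shape in Theorem~\ref{thm:ordinary-classification}, because $R$ maps ${\bf e}_m$ into $\K{\bf e}_m+\K{\bf e}_n$ and $i,j<n$. The same computation also yields $d_j/d_i=2^{i-j}$. We record this relation because it will be the natural input for proving the geometric form of the diagonal string later.
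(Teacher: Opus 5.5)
Your proof is correct and follows essentially the same route as the paper: both apply locality to ${\bf x}={\bf e}_i+{\bf e}_j$, use the nonzero ${\bf e}_i$-coefficient to force the witness's threshold to be at most $i$, and then read off the nonzero ${\bf e}_j$-coefficient $2^{n-j}\alpha$ from Theorem~\ref{thm:ordinary-classification}. The differences are cosmetic: you argue directly rather than by contradiction, and you make the coordinate bookkeeping explicit, which also gives the ratio $d_j/d_i=2^{i-j}$ that the paper derives separately in Proposition~\ref{prop:common-alpha}.
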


\begin{proof}
Assume $J\neq\varnothing$, and let $i\in J$. We claim that every $j>i$ also belongs to $J$.

Suppose, to the contrary, that there exists $j>i$ with $d_j=0$. Consider the vector
\[
{\bf x}={\bf e}_i+{\bf e}_j.
\]
Since $T$ is local, there exists $R_{\bf x}\in \RB_0(\E)$ such that
\[
T({\bf x})=R_{\bf x}({\bf x}).
\]
By \eqref{eq:T-shape-1},
\[
T({\bf x})=d_i {\bf e}_i + (c_i+c_j){\bf e}_n,
\]
because $d_j=0$.

Now apply Theorem~\ref{thm:ordinary-classification} to $R_{\bf x}$. Since the coefficient of ${\bf e}_i$ in $R_{\bf x}({\bf x})$ is nonzero, the threshold $k_{\bf x}$ of $R_{\bf x}$ must satisfy $k_{\bf x}\le i$. But then $j>i\ge k_{\bf x}$, so the same theorem implies that the coefficient of ${\bf e}_j$ in $R_{\bf x}({\bf x})$ is
\[
2^{\,n-j}\alpha_{\bf x},
\]
where $\alpha_{\bf x}\neq 0$ because the coefficient of ${\bf e}_i$ is nonzero. Thus the ${\bf e}_j$-coefficient of $R_{\bf x}({\bf x})$ is also nonzero, contradicting the formula for $T({\bf x})$ above. Therefore $d_j\neq 0$ for all $j>i$.

This proves that $J$ is a final segment.
\end{proof}

\begin{proposition}\label{prop:common-alpha}
Let $T\in \LRB_0(\E)$ and assume that
\[
J=\{k,k+1,\dots,n-1\}
\]
is nonempty. Then there exists a scalar $\alpha\in \K\setminus\{0\}$ such that
\[
d_i=2^{n-i}\alpha,\qquad k\le i\le n-1.
\]
\end{proposition}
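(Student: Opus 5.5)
The plan mirrors the proof of Proposition~\ref{prop:final-segment}: test locality on a two-term vector and read off coefficients through Theorem~\ref{thm:ordinary-classification}. First I show that consecutive diagonal coefficients satisfy $d_i = 2\,d_{i+1}$ for $k\le i<n-1$. Setting $\alpha := d_{n-1}/2$, which is nonzero since $n-1\in J$, a downward induction then gives $d_i = 2^{n-i}\alpha$ for all $i\in J$.

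To compare two diagonal coefficients, fix $i,j\in J$ with $i<j$ and consider ${\bf x}={\bf e}_i+{\bf e}_j$. Locality provides $R_{\bf x}\in\RB_0(\E)$ with $T({\bf x})=R_{\bf x}({\bf x})$. By \eqref{eq:T-shape-1},
\[
T({\bf x}) = d_i{\bf e}_i + d_j{\bf e}_j + (c_i+c_j){\bf e}_n .
\]
Because $d_i\neq0$, the operator $R_{\bf x}$ cannot be of type (i) in Theorem~\ref{thm:ordinary-classification}. So it is of type (ii), with some threshold $k_{\bf x}$ and scalar $\alpha_{\bf x}\neq0$.

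The ${\bf e}_i$-coefficient of $R_{\bf x}({\bf x})$ is nonzero, so $k_{\bf x}\le i<j$. Since $i,j<n$, Theorem~\ref{thm:ordinary-classification} then gives the ${\bf e}_i$- and ${\bf e}_j$-coefficients of $R_{\bf x}({\bf x})$ as $2^{n-i}\alpha_{\bf x}$ and $2^{n-j}\alpha_{\bf x}$. The same holds when $i=k_{\bf x}$, because the extra $\beta_k{\bf e}_n$ term affects only the ${\bf e}_n$-coordinate. Comparing coefficients yields $d_i=2^{n-i}\alpha_{\bf x}$ and $d_j=2^{n-j}\alpha_{\bf x}$, hence
\[
d_i = 2^{\,j-i}\,d_j .
\]
Taking $j=n-1$ gives the claim directly, with $\alpha=d_{n-1}/2$; the induction is not even needed.

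I do not expect a serious obstacle. The one point needing care is that the ${\bf e}_n$-coefficients $\beta$ do not interfere with the diagonal comparison, and this holds because $i,j\le n-1$.

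If $J$ is a single index ($k=n-1$), there is nothing to compare. In that case $\alpha=d_{n-1}/2$ works trivially, and its nonvanishing follows from $n-1\in J$.
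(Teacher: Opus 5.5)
Your proposal is correct and follows essentially the paper's own proof. It uses the same test vector ${\bf x}={\bf e}_i+{\bf e}_j$ and the same appeal to Theorem~\ref{thm:ordinary-classification} to get $d_i=2^{n-i}\alpha_{\bf x}$ and $d_j=2^{n-j}\alpha_{\bf x}$; the only differences are that you normalize by $\alpha=d_{n-1}/2$ instead of $\alpha=2^{k-n}d_k$, and that you make explicit the points the paper leaves implicit ($R_{\bf x}$ cannot be annihilator-valued, and the $\beta$-terms affect only the ${\bf e}_n$-coordinate).
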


\begin{proof}
Fix $i,j\in J$ with $i<j$. Consider
\[
{\bf x}={\bf e}_i+{\bf e}_j.
\]
Since $T$ is local, there exists $R_{\bf x}\in \RB_0(\E)$ such that $T({\bf x})=R_{\bf x}({\bf x})$.

By \eqref{eq:T-shape-1},
\[
T({\bf x})=d_i {\bf e}_i+d_j {\bf e}_j+(c_i+c_j){\bf e}_n.
\]
Both ${\bf e}_i$- and ${\bf e}_j$-coefficients are nonzero. Therefore, by Theorem~\ref{thm:ordinary-classification}, the threshold $k_{\bf x}$ of $R_{\bf x}$ must satisfy $k_{\bf x}\le i$, and the diagonal part of $R_{\bf x}$ is determined by one scalar $\alpha_{\bf x}$. Hence
\[
d_i=2^{n-i}\alpha_{\bf x},\qquad d_j=2^{n-j}\alpha_{\bf x}.
\]
It follows that
\[
2^{i-n}d_i=2^{j-n}d_j.
\]
Since $i,j\in J$ were arbitrary, the quantity $2^{i-n}d_i$ is independent of $i\in J$. Let
\[
\alpha:=2^{k-n}d_k.
\]
Then
\[
d_i=2^{n-i}\alpha,\qquad k\le i\le n-1.
\]
Because $d_k\neq 0$, we also have $\alpha\neq 0$.
\end{proof}

We can now state the main theorem.

\begin{theorem}\label{thm:main-local}
Let $\E$ be an $n$-dimensional nilpotent evolution algebra of maximal nilindex in the normal form \eqref{eq:natural-form}. A linear operator $T:\E\to \E$ belongs to $\LRB_0(\E)$ if and only if exactly one of the following alternatives holds.
\begin{enumerate}[label=\textup{(\roman*)}]
\item The operator is annihilator-valued: there exist scalars $c_1,\dots,c_n\in\K$ such that
\begin{equation}\label{eq:local-zero-diagonal}
T({\bf e}_i)=c_i{\bf e}_n\qquad (1\le i\le n).
\end{equation}
\item The diagonal part is nonzero and is a final geometric string: there exist an index
\[
k\in\{i_0+1,\dots,n-1\},
\]
a scalar $\alpha\in\K^\times$, and scalars $c_1,\dots,c_n\in\K$ such that
\begin{equation}\label{eq:local-classification}
T({\bf e}_i)=
\begin{cases}
c_i {\bf e}_n, & 1\le i<k,\\
2^{n-i}\alpha\,{\bf e}_i+c_i {\bf e}_n, & k\le i<n,\\
c_n {\bf e}_n, & i=n.
\end{cases}
\end{equation}
\end{enumerate}
Equivalently, every $T\in\LRB_0(\E)$ has no matrix entries outside the diagonal and the last column; its diagonal entries on ${\bf e}_1,\dots,{\bf e}_{n-1}$ are either all zero, or they are
\[
0,\dots,0,2^{n-k}\alpha,2^{n-k-1}\alpha,\dots,2\alpha
\]
for a unique $k\in\{i_0+1,\dots,n-1\}$ and a unique $\alpha\in\K^\times$. In both cases the last-column coefficients are arbitrary.
\end{theorem}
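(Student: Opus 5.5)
The proof splits into necessity, which is almost entirely assembled from the results above, and sufficiency, which needs an explicit witness for each vector.

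\emph{Necessity.} Let $T\in\LRB_0(\E)$. By Lemma~\ref{lem:basis-images}, $T$ has the shape \eqref{eq:T-shape-1}. By Proposition~\ref{prop:final-segment}, the set $J$ of nonzero diagonal positions is either empty or a final segment $\{k,\dots,n-1\}$ with $k>i_0$.
\begin{itemize}
\item If $J=\varnothing$, then \eqref{eq:T-shape-1} reduces to alternative (i).
\item If $J\neq\varnothing$, Proposition~\ref{prop:common-alpha} gives $d_i=2^{n-i}\alpha$ for $k\le i\le n-1$, with $\alpha\ne0$. Setting $d_i=0$ for $i_0<i<k$, this is exactly \eqref{eq:local-classification}.
\end{itemize}
The two alternatives are mutually exclusive because the diagonal is zero in (i) and nonzero in (ii). In (ii), $k$ is recovered as $\min J$ and $\alpha=2^{k-n}d_k$, which gives the asserted uniqueness. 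This direction needs nothing new.

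\emph{Sufficiency.} Here we must show that each $T$ of the form (i) or (ii) is local. Fix ${\bf x}=\sum x_i{\bf e}_i$; we need some $R_{\bf x}\in\RB_0(\E)$ with $R_{\bf x}({\bf x})=T({\bf x})$. The key observation is that the last-column freedom of Theorem~\ref{thm:ordinary-classification} can absorb the arbitrary $c$'s whenever ${\bf x}$ has a nonzero coordinate at a suitable index.

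In case (i), $T({\bf x})=\gamma{\bf e}_n$ with $\gamma=\sum_i c_ix_i$. We use operators of type (i) of Theorem~\ref{thm:ordinary-classification}, $R({\bf e}_i)=\beta_i{\bf e}_n$.
\begin{itemize}
\item If ${\bf x}\neq0$, choose $m$ with $x_m\neq0$ and set $\beta_m=\gamma/x_m$ and all other $\beta_i=0$; then $R({\bf x})=\gamma{\bf e}_n$.
\item If ${\bf x}=0$, take $R=0$.
\end{itemize}

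In case (ii), $T({\bf x})=\sum_{i=k}^{n-1}2^{n-i}\alpha x_i{\bf e}_i+\gamma{\bf e}_n$ with $\gamma=\sum_i c_ix_i$. We try the ordinary operator $R$ with the same $k$ and $\alpha$ and free $\beta_1,\dots,\beta_k$. Then
\[
R({\bf x})=\sum_{i=k}^{n-1}2^{n-i}\alpha x_i{\bf e}_i+\Bigl(\alpha x_n+\sum_{i\le k}\beta_ix_i\Bigr){\bf e}_n .
\]
Note that $R({\bf e}_n)=\alpha{\bf e}_n$ is forced, which is why the ${\bf e}_n$-coefficient contains $\alpha x_n$.
\begin{itemize}
\item If some $x_m\neq0$ with $m\le k$, we choose $\beta_m$ so that the ${\bf e}_n$-coefficient equals $\gamma$.
\item If $x_1=\dots=x_k=0$ but some $x_j\ne0$ with $k<j<n$, we use the operator with threshold $k'=j$ and the same $\alpha$. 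This is admissible since $j>k>i_0$. Its diagonal on indices $\ge j$ agrees with that of $T$, and the coordinates of ${\bf x}$ below $j$ vanish, so the diagonal parts match; we then adjust $\beta_j$.
\item If ${\bf x}=x_n{\bf e}_n$, then $T({\bf x})=c_nx_n{\bf e}_n$. A type (i) operator with $\beta_n=c_n$ works.
\end{itemize}

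The main obstacle is this sufficiency step. The point to get right is that the $\alpha x_n$ term, which is forced by $R({\bf e}_n)=\alpha{\bf e}_n$, never prevents matching $\gamma$. It is harmless because it is absorbed by the free $\beta_m$ whenever some lower coordinate is nonzero, and the pure ${\bf e}_n$ case is handled separately by a type (i) operator.
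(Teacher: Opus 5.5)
Your proposal is correct and is essentially the paper's argument. Necessity is assembled from Lemma~\ref{lem:basis-images} and Propositions~\ref{prop:final-segment} and~\ref{prop:common-alpha}, and sufficiency uses an ordinary witness with the same $\alpha$, a threshold placed at the first nonzero coordinate of ${\bf x}$ in the string range, and one free last-column coefficient solved for. Your case split (threshold $k$ whenever $x_m\neq0$ for some $m\le k$) differs only cosmetically from the paper's split on whether $x_k=\cdots=x_{n-1}=0$, and in case (i) you could simply note that $T$ itself already lies in $\RB_0(\E)$.

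One point needs tightening, in your second subcase of (ii): $j$ must be chosen as the \emph{least} index with $x_j\neq0$. Otherwise your claim that the coordinates of ${\bf x}$ below $j$ vanish fails, and the diagonal parts need not match.
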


\begin{proof}
We first prove necessity.

Let $T\in \LRB_0(\E)$. By Lemma~\ref{lem:basis-images}, $T$ has the form \eqref{eq:T-shape-1}. If all diagonal coefficients $d_i$ vanish for $i_0<i<n$, then $T$ has the form \eqref{eq:local-zero-diagonal}.

Assume now that some $d_i$ is nonzero. By Proposition~\ref{prop:final-segment}, the set of indices with nonzero diagonal coefficient is a final segment
\[
J=\{k,k+1,\dots,n-1\}
\]
for some $k\in\{i_0+1,\dots,n-1\}$. By Proposition~\ref{prop:common-alpha}, there exists $\alpha\in\K^\times$ such that
\[
d_i=2^{n-i}\alpha,\qquad k\le i\le n-1.
\]
Thus $T$ has the form \eqref{eq:local-classification}.

We now prove sufficiency.

If $T$ has the annihilator-valued form \eqref{eq:local-zero-diagonal}, then $T\in\RB_0(\E)$ by the annihilator-valued alternative in Theorem~\ref{thm:ordinary-classification}. Hence $T\in\LRB_0(\E)$.

Assume that $T$ has the form \eqref{eq:local-classification}. Let
\[
{\bf x}=\sum_{i=1}^n x_i {\bf e}_i\in \E.
\]
We construct an ordinary operator $R_{\bf x}\in\RB_0(\E)$ such that
\[
R_{\bf x}({\bf x})=T({\bf x}).
\]

There are two cases.

\smallskip
\noindent\emph{Case 1:} $x_k=x_{k+1}=\cdots=x_{n-1}=0$.

In this case,
\[
T({\bf x})=\left(\sum_{i=1}^{n-1} c_i x_i + c_n x_n\right){\bf e}_n.
\]
Define $R_{\bf x}$ by
\[
R_{\bf x}({\bf e}_i)=c_i{\bf e}_n\quad (1\le i<n),
\qquad
R_{\bf x}({\bf e}_n)=c_n{\bf e}_n.
\]
By Theorem~\ref{thm:ordinary-classification}, this is an ordinary weight-zero Rota--Baxter operator, and it plainly satisfies $R_{\bf x}({\bf x})=T({\bf x})$.

\smallskip
\noindent\emph{Case 2:} at least one of $x_k,\dots,x_{n-1}$ is nonzero.

Let
\[
s=\min\{i\in\{k,\dots,n-1\}: x_i\neq 0\}.
\]
We define an ordinary operator $R_{\bf x}$ with nonzero threshold $s$. For $i<s$, set
\[
\beta_i=c_i.
\]
Next, define
\begin{equation}\label{eq:beta-s}
\beta_s=
\frac{c_s x_s+c_{s+1}x_{s+1}+\cdots+c_{n-1}x_{n-1}+(c_n-\alpha)x_n}{x_s}.
\end{equation}
Since $x_s\neq 0$, this is well-defined. With the same scalar $\alpha$ as in \eqref{eq:local-classification}, Theorem~\ref{thm:ordinary-classification} gives an ordinary operator $R_{\bf x}\in\RB_0(\E)$ satisfying
\[
R_{\bf x}({\bf e}_i)=
\begin{cases}
\beta_i {\bf e}_n, & i<s,\\
2^{n-s}\alpha\,{\bf e}_s+\beta_s {\bf e}_n, & i=s,\\
2^{n-i}\alpha\,{\bf e}_i, & s<i\le n.
\end{cases}
\]

Since $x_k=\cdots=x_{s-1}=0$, we obtain
\[
R_{\bf x}({\bf x})
=
\sum_{i=s}^{n-1}2^{n-i}\alpha x_i{\bf e}_i
+
\left(\sum_{i=1}^{s-1}c_i x_i+\beta_sx_s+\alpha x_n\right){\bf e}_n.
\]
By the choice of $\beta_s$ in \eqref{eq:beta-s}, the ${\bf e}_n$-coefficient is
\[
\sum_{i=1}^{n-1} c_i x_i+c_nx_n.
\]
Therefore
\[
R_{\bf x}({\bf x})
=
\sum_{i=s}^{n-1}2^{n-i}\alpha x_i{\bf e}_i
+
\left(\sum_{i=1}^{n-1} c_i x_i+c_nx_n\right){\bf e}_n
=T({\bf x}),
\]
because $x_k=\cdots=x_{s-1}=0$. Hence $T$ is local.
\end{proof}
\begin{corollary}\label{cor:ordinary-inclusion}
Every ordinary weight-zero Rota--Baxter operator is local. Moreover, let $T\in\LRB_0(\E)$.
\begin{enumerate}[label=\textup{(\roman*)}]
\item If $T(\E)\subseteq\K{\bf e}_n$, then $T\in\RB_0(\E)$.
\item If $T$ has nonzero diagonal part and is written as in \eqref{eq:local-classification}, then
\[
T\in\RB_0(\E)
\]
if and only if
\[
c_i=0\qquad (k<i<n),
\qquad\text{and}\qquad
c_n=\alpha.
\]
\end{enumerate}
\end{corollary}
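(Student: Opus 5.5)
The plan is to compare the two explicit classifications directly: Theorem~\ref{thm:ordinary-classification} for $\RB_0(\E)$ and Theorem~\ref{thm:main-local} for $\LRB_0(\E)$. The first assertion, that every ordinary operator is local, needs nothing beyond the definition. Given $R\in\RB_0(\E)$, take $R_{\bf x}=R$ for every ${\bf x}$; then $R({\bf x})=R_{\bf x}({\bf x})$ trivially.

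For part (i), let $T\in\LRB_0(\E)$ with $T(\E)\subseteq\K{\bf e}_n$. Then $T({\bf e}_i)=c_i{\bf e}_n$ for all $i$. This is exactly alternative (i) of Theorem~\ref{thm:ordinary-classification} with $\beta_i=c_i$, so $T\in\RB_0(\E)$.

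For part (ii), write $T$ as in \eqref{eq:local-classification}, with threshold $k$, scalar $\alpha\neq0$ and coefficients $c_i$.

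\emph{Sufficiency.} Suppose $c_i=0$ for $k<i<n$ and $c_n=\alpha$. Then
\[
T({\bf e}_k)=2^{n-k}\alpha{\bf e}_k+c_k{\bf e}_n,\qquad T({\bf e}_i)=2^{n-i}\alpha{\bf e}_i \ (k<i<n),\qquad T({\bf e}_n)=\alpha{\bf e}_n=2^{0}\alpha{\bf e}_n .
\]
Also $T({\bf e}_i)=c_i{\bf e}_n$ for $i<k$. This is precisely alternative (ii) of Theorem~\ref{thm:ordinary-classification} with the same $k$ and $\alpha$, and with $\beta_i=c_i$ for $i\le k$.

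\emph{Necessity.} Suppose $T\in\RB_0(\E)$. Since its diagonal part is nonzero, $T$ is not annihilator-valued (note $k\le n-1$, so $T({\bf e}_k)\notin\K{\bf e}_n$). Hence $T$ falls under alternative (ii) of Theorem~\ref{thm:ordinary-classification}, with some threshold $k'$ and scalar $\alpha'$. I will then identify the parameters. The first index with a nonzero diagonal entry is $k$ in \eqref{eq:local-classification} and $k'$ in the ordinary form, so $k'=k$. Comparing the ${\bf e}_k$-coefficients gives $2^{n-k}\alpha=2^{n-k}\alpha'$, so $\alpha'=\alpha$. The ordinary form gives $T({\bf e}_i)=2^{n-i}\alpha{\bf e}_i$ with no ${\bf e}_n$-component for $k<i<n$, so $c_i=0$ there. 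Finally, comparing $T({\bf e}_n)$ gives $c_n=2^{0}\alpha=\alpha$.

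The only point needing care is the bookkeeping at $i=n$. There the ordinary formula $2^{n-i}\alpha{\bf e}_i$ becomes $\alpha{\bf e}_n$, which is a last-column entry rather than a diagonal entry in the sense of \eqref{eq:local-classification}. This is exactly what forces $c_n=\alpha$.
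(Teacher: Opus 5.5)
Your proposal is correct and follows the same route as the paper: the inclusion $\RB_0(\E)\subseteq\LRB_0(\E)$ comes straight from the definition, and parts (i) and (ii) come from comparing the local normal form \eqref{eq:local-classification} with the two alternatives of Theorem~\ref{thm:ordinary-classification}. The only difference is that you spell out the necessity direction (the threshold matches, $k'=k$, the scalar matches, $\alpha'=\alpha$, and $c_n=2^{0}\alpha$), which the paper leaves implicit in its one-line comparison.
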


\begin{proof}
The inclusion $\RB_0(\E)\subseteq\LRB_0(\E)$ is immediate from the definition.

If $T(\E)\subseteq\K{\bf e}_n$, then $T$ is ordinary by the annihilator-valued alternative in Theorem~\ref{thm:ordinary-classification}. This proves (i).

Assume now that $T$ has nonzero diagonal part and is written as in \eqref{eq:local-classification}. Comparing this form with the nonzero-diagonal alternative in Theorem~\ref{thm:ordinary-classification}, the coefficient $c_k$ is allowed, but after the threshold no last-column terms are allowed and the image of ${\bf e}_n$ is forced to be $\alpha{\bf e}_n$. Hence the necessary and sufficient conditions are precisely $c_i=0$ for $k<i<n$ and $c_n=\alpha$.
\end{proof}
\begin{remark}
Theorem~\ref{thm:main-local} shows that the diagonal rigidity found in \cite{QMK-RB} survives in the local setting, but the last column loses rigidity completely. This is the precise point at which the local class becomes larger than the ordinary one.
\end{remark}

From the last result, we infer that the class of local Rota--Baxter operators is larger than the class of Rota--Baxter operators. Therefore, it is natural to investigate local Rota--Baxter operators further.

Namely, a linear map $K:\E\to\E$ is called an \textit{$s$-local Rota--Baxter operator} of weight $0$ if for every ${\bf x}\in\E$ there exists a local Rota--Baxter operator $T_{\bf x}$ of weight $0$ on $\E$ such that
\[
K({\bf x})=T_{\bf x}({\bf x}).
\]

Clearly, every local Rota--Baxter operator is $s$-local.
\begin{theorem}\label{thm:s-local-classification}
Let $\E$ be an $n$-dimensional nilpotent evolution algebra of maximal nilindex in the normal form \eqref{eq:natural-form}. Then every $s$-local Rota--Baxter operator of weight $0$ on $\E$ is a local Rota--Baxter operator of weight $0$.
\end{theorem}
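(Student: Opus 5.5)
The plan is to rerun the necessity half of Theorem~\ref{thm:main-local} with local witnesses in place of ordinary ones. The key observation is that the necessity argument (Lemma~\ref{lem:basis-images}, Proposition~\ref{prop:final-segment}, Proposition~\ref{prop:common-alpha}) only used one thing about each $R_{\bf x}$: the shape of the vector $R_{\bf x}({\bf x})$ for the specific test vectors ${\bf x}={\bf e}_i$ and ${\bf x}={\bf e}_i+{\bf e}_j$. Theorem~\ref{thm:main-local} tells us that every local operator $T_{\bf x}$ has the same rigid diagonal shape as an ordinary one. Only the last column is free, and the earlier arguments never constrained the last column. So I would show that $K$ satisfies the conclusions of those three results, and then invoke the sufficiency half of Theorem~\ref{thm:main-local}.

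\emph{Step 1.} For each $i$, choose $T_{{\bf e}_i}\in\LRB_0(\E)$ with $K({\bf e}_i)=T_{{\bf e}_i}({\bf e}_i)$, and read off its shape from Theorem~\ref{thm:main-local}. Its threshold $k$ always satisfies $k>i_0$, and it has no entries outside the diagonal and the last column. This gives:
\begin{itemize}
\item $K({\bf e}_i)\in\K{\bf e}_n$ for $i\le i_0$;
\item $K({\bf e}_i)\in\K{\bf e}_i+\K{\bf e}_n$ for $i_0<i<n$;
\item $K({\bf e}_n)\in\K{\bf e}_n$.
\end{itemize}
Hence $K$ has the form \eqref{eq:T-shape-1}, with coefficients $d_i$ and $c_i$.

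\emph{Step 2.} Suppose $d_i\ne0$ and $d_j=0$ for some $j>i$. Take ${\bf x}={\bf e}_i+{\bf e}_j$ and a local witness $T_{\bf x}$. The ${\bf e}_i$-coefficient of $T_{\bf x}({\bf x})$ is the diagonal entry of $T_{\bf x}$ at $i$, since the only off-diagonal entries of $T_{\bf x}$ lie in the last column and $i<n$. This entry equals $d_i\neq0$. By Theorem~\ref{thm:main-local}, the threshold of $T_{\bf x}$ is therefore at most $i$, and the diagonal entry of $T_{\bf x}$ at $j$ is $2^{n-j}\alpha_{\bf x}\ne0$. That entry must equal $d_j=0$, a contradiction. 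So $J$ is a final segment $\{k,\dots,n-1\}$.

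\emph{Step 3.} For $i<j$ in $J$, the same test vector gives $d_i=2^{n-i}\alpha_{\bf x}$ and $d_j=2^{n-j}\alpha_{\bf x}$. Hence $2^{i-n}d_i$ does not depend on $i$, and $d_i=2^{n-i}\alpha$ with $\alpha\neq0$. Now $K$ has exactly the form \eqref{eq:local-zero-diagonal} or \eqref{eq:local-classification}. The sufficiency part of Theorem~\ref{thm:main-local} then gives $K\in\LRB_0(\E)$.

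The only delicate point is the coefficient extraction in Steps~2 and~3. We must make sure that the last-column freedom of local witnesses cannot contaminate the ${\bf e}_i$- and ${\bf e}_j$-coefficients. This holds because $i,j<n$, and every entry of $T_{\bf x}$ outside the diagonal lies in row $n$. Note that the witnesses themselves need not be compatible with one another. That is harmless, since the classification in Theorem~\ref{thm:main-local} is stated purely in terms of matrix entries of $K$.
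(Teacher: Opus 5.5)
Your proposal is correct and follows the paper's proof essentially step for step: you read off the basis-vector shapes from Theorem~\ref{thm:main-local}, use the test vectors ${\bf e}_i+{\bf e}_j$ to get the final segment and the common scalar $\alpha$, and finish with the sufficiency half of Theorem~\ref{thm:main-local}; your remark that the last-column entries cannot affect the ${\bf e}_i$- and ${\bf e}_j$-coefficients (since $i,j<n$) just makes explicit a point the paper leaves implicit. As an aside, neither your argument nor the paper's actually needs the classification: for each ${\bf x}$ the witness $T_{\bf x}$ is itself local, so $T_{\bf x}({\bf x})=R({\bf x})$ for some $R\in\RB_0(\E)$, hence $K({\bf x})=R({\bf x})$ and $K\in\LRB_0(\E)$ follows directly from the definition.
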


\begin{proof}
Assume that $K$ is an $s$-local Rota--Baxter operator of weight $0$. For each ${\bf x}\in\E$ there exists $T_{\bf x}\in\LRB_0(\E)$ such that
\[
K({\bf x})=T_{\bf x}({\bf x}).
\]

Applying Theorem~\ref{thm:main-local} to the operators attached to the basis vectors, we get
\[
K({\bf e}_i)=
\begin{cases}
c_i{\bf e}_n, & 1\le i\le i_0,\\
d_i{\bf e}_i+c_i{\bf e}_n, & i_0<i<n,\\
c_n{\bf e}_n, & i=n,
\end{cases}
\]
for suitable scalars $c_i,d_i\in\K$.

Let
\[
J=\{i\in\{i_0+1,\dots,n-1\}:d_i\neq0\}.
\]
We first show that $J$ is either empty or a final segment. Suppose that $i\in J$ and that $j>i$ with $j<n$. If $d_j=0$, then for ${\bf x}={\bf e}_i+{\bf e}_j$ we have
\[
K({\bf x})=d_i{\bf e}_i+(c_i+c_j){\bf e}_n.
\]
Choose $T_{\bf x}\in\LRB_0(\E)$ with $K({\bf x})=T_{\bf x}({\bf x})$. Since the coefficient of ${\bf e}_i$ is nonzero, Theorem~\ref{thm:main-local} forces the nonzero diagonal string of $T_{\bf x}$ to start at or before $i$. Hence the coefficient of ${\bf e}_j$ in $T_{\bf x}({\bf x})$ is also nonzero, a contradiction. Thus $d_j\neq0$, and $J$ is a final segment.

If $J=\varnothing$, then $K(\E)\subseteq\K{\bf e}_n$, so $K\in\LRB_0(\E)$ by Theorem~\ref{thm:main-local}.

Assume now that
\[
J=\{k,k+1,\dots,n-1\}
\]
for some $k\in\{i_0+1,\dots,n-1\}$. For any $i,j\in J$ with $i<j$, apply the $s$-local property to ${\bf x}={\bf e}_i+{\bf e}_j$. Theorem~\ref{thm:main-local} gives a single scalar $\alpha_{\bf x}\neq0$ such that
\[
d_i=2^{n-i}\alpha_{\bf x},
\qquad
d_j=2^{n-j}\alpha_{\bf x}.
\]
Therefore $2^{i-n}d_i=2^{j-n}d_j$ for all $i,j\in J$. Taking $\alpha=2^{k-n}d_k$, we obtain
\[
d_i=2^{n-i}\alpha\qquad (k\le i<n),
\]
with $\alpha\in\K^\times$. Hence $K$ has the nonzero-diagonal form \eqref{eq:local-classification}. By Theorem~\ref{thm:main-local}, $K\in\LRB_0(\E)$.
\end{proof}
\section{Examples and counterexamples}\label{sec:examples}

In this section we illustrate Theorem~\ref{thm:main-local} on several low-dimensional nilpotent evolution algebras of maximal nilindex. In each case, we verify explicitly that the given operators satisfy the structural description of local Rota--Baxter operators of weight zero, and we show that they are not ordinary Rota--Baxter operators.

\begin{example}
Let $\E$ be the three-dimensional evolution algebra with natural basis $\{{\bf e}_1,{\bf e}_2,{\bf e}_3\}$ and multiplication
\[
{\bf e}_1^2 = {\bf e}_2, \qquad {\bf e}_2^2 = {\bf e}_3, \qquad {\bf e}_3^2 ={\bf 0}.
\]
Then $\mathcal I_A = \varnothing$, so $i_0 = 0$.

By Theorem~\ref{thm:main-local}, a linear operator $T:\E \to \E$ belongs to $\LRB_0(\E)$ if and only if either $T(\E)\subseteq\K{\bf e}_3$, or there exist $k\in\{1,2\}$, $\alpha\in\K^\times$, and $c_1,c_2,c_3\in\K$ such that
\[
T({\bf e}_i)=
\begin{cases}
c_i {\bf e}_3, & i<k,\\
2^{3-i}\alpha\, {\bf e}_i + c_i {\bf e}_3, & k \le i < 3,\\
c_3 {\bf e}_3, & i=3.
\end{cases}
\]

Consider the operator
\[
T({\bf e}_1)={\bf 0}, \qquad T({\bf e}_2)=2{\bf e}_2+5{\bf e}_3, \qquad T({\bf e}_3)=7{\bf e}_3.
\]
This operator is of the form above with $k=2$ and $\alpha=1$, since $2^{3-2}=2$. Hence $T \in \LRB_0(\E)$ by Theorem~\ref{thm:main-local}.

We show that $T \notin \RB_0(\E)$. Indeed, if $T$ were an ordinary Rota--Baxter operator, then by Theorem~\ref{thm:ordinary-classification} with $k=2$ and $\alpha=1$, one would necessarily have
\[
T({\bf e}_3)=\alpha {\bf e}_3 = {\bf e}_3,
\]
which contradicts $T({\bf e}_3)=7{\bf e}_3$. Therefore,
\[
T \in \LRB_0(\E)\setminus \RB_0(\E).
\]
\end{example}

\begin{example}
Let $\E$ be the four-dimensional evolution algebra with natural basis $\{{\bf e}_1,{\bf e}_2,{\bf e}_3,{\bf e}_4\}$ and multiplication
\[
{\bf e}_1^2 = {\bf e}_2, \qquad {\bf e}_2^2 = {\bf e}_3, \qquad {\bf e}_3^2 = {\bf e}_4, \qquad {\bf e}_4^2 = {\bf 0}.
\]
Again $\mathcal I_A = \varnothing$, so $i_0 = 0$.

By Theorem~\ref{thm:main-local}, for $n=4$ the diagonal coefficients are $2^{4-i}\alpha$. Hence they equal $4\alpha$ for $i=2$ and $2\alpha$ for $i=3$.

Consider the linear map
\[
T({\bf e}_1)=3{\bf e}_4, \quad T({\bf e}_2)=4{\bf e}_2+{\bf e}_4, \quad T({\bf e}_3)=2{\bf e}_3-6{\bf e}_4, \quad T({\bf e}_4)=10{\bf e}_4.
\]
This operator satisfies the form in Theorem~\ref{thm:main-local} with $k=2$ and $\alpha=1$, since $2^{4-2}=4$ and $2^{4-3}=2$. Therefore $T \in \LRB_0(\E)$.

However, $T$ is not an ordinary Rota--Baxter operator. Indeed, if $T \in \RB_0(\E)$, then by Theorem~\ref{thm:ordinary-classification} with $k=2$ and $\alpha=1$, one must have
\[
T({\bf e}_3)=2{\bf e}_3, \qquad T({\bf e}_4)={\bf e}_4,
\]
which contradicts the given expressions. Hence
\[
T \in \LRB_0(\E)\setminus \RB_0(\E).
\]
\end{example}

\begin{example}
Let $\E$ be the four-dimensional evolution algebra with natural basis $\{{\bf e}_1,{\bf e}_2,{\bf e}_3,{\bf e}_4\}$ and multiplication
\[
{\bf e}_1^2 = {\bf e}_2 + a {\bf e}_3, \qquad {\bf e}_2^2 = {\bf e}_3, \qquad {\bf e}_3^2 = {\bf e}_4, \qquad {\bf e}_4^2 ={\bf 0},
\]
where $a \ne 0$. Then $(1,3) \in\mathcal I_A$, hence $\mathcal I_A \ne \varnothing$ and $i_0 = 1$.

By Theorem~\ref{thm:main-local}, every local Rota--Baxter operator of weight zero either satisfies $T(\E)\subseteq\K{\bf e}_4$, or has the following nonzero-diagonal form. One has
\[
T({\bf e}_1) = c_1 {\bf e}_4,
\]
and for $k \in \{2,3\}$,
\[
T({\bf e}_2)=
\begin{cases}
c_2 {\bf e}_4, & k=3,\\
4\alpha\, {\bf e}_2 + c_2 {\bf e}_4, & k=2,
\end{cases}
\]
\[
T({\bf e}_3)=2\alpha\,{\bf e}_3+c_3{\bf e}_4,
\qquad
T({\bf e}_4)=c_4 {\bf e}_4.
\]

In particular, $T({\bf e}_1)\in \K {\bf e}_4$, so no ${\bf e}_1$-component may appear in the image of ${\bf e}_1$.

Consider the operator corresponding to $k=2$ and $\alpha=1$:
\[
T({\bf e}_1)=0, \quad T({\bf e}_2)=4{\bf e}_2+5{\bf e}_4, \quad T({\bf e}_3)=2{\bf e}_3+7{\bf e}_4, \quad T({\bf e}_4)=9{\bf e}_4.
\]
This operator satisfies the form in Theorem~\ref{thm:main-local} and therefore belongs to $\LRB_0(\E)$.

We show that $T \notin \RB_0(\E)$. If $T$ were an ordinary Rota--Baxter operator, then by Theorem~\ref{thm:ordinary-classification} with $k=2$ and $\alpha=1$, one would necessarily have
\[
T({\bf e}_3)=2{\bf e}_3, \qquad T({\bf e}_4)={\bf e}_4,
\]
which contradicts the given expressions. Hence
\[
T \in \LRB_0(\E)\setminus \RB_0(\E).
\]

This example shows that even when $\mathcal I_A \ne \varnothing$, the local and ordinary classes do not coincide. The role of $\mathcal I_A$ is to determine the obstruction index $i_0$, which controls the earliest position at which the diagonal geometric progression may begin.
\end{example}

\section[Dimension and structure of local parameter spaces]{Dimension and structure of \texorpdfstring{$\LRB_0(\E)$}{LRB0(E)} as a parameter space}\label{sec:parameter-space}

The classification in Theorem~\ref{thm:main-local} admits a natural interpretation in terms of a finite union of quasi-affine families.

\begin{proposition}
Let $\E$ be an $n$-dimensional nilpotent evolution algebra of maximal nilindex in the normal form \eqref{eq:natural-form}, and let $i_0$ be defined by \eqref{eq:IA}. Then the set $\LRB_0(\E)$ of local Rota--Baxter operators of weight zero admits the following decomposition:
\begin{equation}\label{eq:LRB-stratification}
\LRB_0(\E)=\mathcal{V}_0\;\sqcup\;\bigcup_{k=i_0+1}^{n-1}\mathcal{V}_k,
\end{equation}
where \(\mathcal V_0\) consists of the maps \(T_{\mathbf c}\), with
\(\mathbf c=(c_1,\dots,c_n)\in\K^n\), defined by
\[
T_{\mathbf c}({\bf e}_i)=c_i{\bf e}_n\qquad(1\le i\le n),
\]
and, for each \(k\in\{i_0+1,\dots,n-1\}\), the stratum \(\mathcal V_k\) consists
of the maps \(T_{\alpha,\mathbf c}^{(k)}\), with
\(\alpha\in\K^\times\) and \(\mathbf c=(c_1,\dots,c_n)\in\K^n\), defined by
\[
T_{\alpha,\mathbf c}^{(k)}({\bf e}_i)=
\begin{cases}
c_i {\bf e}_n, & 1\le i<k,\\[1mm]
2^{n-i}\alpha {\bf e}_i+c_i {\bf e}_n, & k\le i<n,\\[1mm]
c_n {\bf e}_n, & i=n.
\end{cases}
\]

Moreover:
\begin{enumerate}
\item $\mathcal{V}_0$ is an affine subspace of $\End(\E)$ of dimension $n$, canonically isomorphic to $\K^n$;
\item for each $k\in\{i_0+1,\dots,n-1\}$, the set $\mathcal{V}_k$ is a quasi-affine irreducible subset of $\End(\E)$, canonically isomorphic to $\K^n\times \K^\times$, and hence
\[
\dim \mathcal{V}_k = n+1;
\]
\item the Zariski closure of each $\mathcal{V}_k$ is
\[
\overline{\mathcal{V}_k}=\mathcal{V}_k\cup \mathcal{V}_0,
\]
so $\mathcal{V}_0$ is the common boundary of all nonzero-diagonal strata;
\item consequently,
\[
\dim \LRB_0(\E)=n+1.
\]
\end{enumerate}
\end{proposition}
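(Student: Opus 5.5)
The decomposition \eqref{eq:LRB-stratification} is a direct restatement of Theorem~\ref{thm:main-local}. Alternative (i) there gives exactly the maps $T_{\mathbf c}$, and alternative (ii) with threshold $k$ gives exactly $T^{(k)}_{\alpha,\mathbf c}$. For disjointness, note that the diagonal entries of $T$ on ${\bf e}_1,\dots,{\bf e}_{n-1}$ recover the data. If all of them vanish, then $T\in\mathcal V_0$. Otherwise $k$ is the first index with a nonzero diagonal entry, and $\alpha=2^{k-n}d_k$. The vector $\mathbf c$ is read off from the last column. Hence the strata $\mathcal V_0,\mathcal V_{i_0+1},\dots,\mathcal V_{n-1}$ are pairwise disjoint, and the uniqueness clause of Theorem~\ref{thm:main-local} holds.

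Next I would use the matrix coordinates of $\End(\E)\cong\K^{n^2}$. The map $\mathbf c\mapsto T_{\mathbf c}$ is injective and linear, and its image is the coordinate subspace
\[
\{\text{all entries outside the last column vanish}\}.
\]
This is item (1). For $k\ge i_0+1$, the map $\Phi_k:\K^\times\times\K^n\to\End(\E)$, $(\alpha,\mathbf c)\mapsto T^{(k)}_{\alpha,\mathbf c}$, is a morphism. Its inverse is a coordinate projection: take $\alpha=2^{k-n}\cdot(\text{$(k,k)$-entry})$ and read $\mathbf c$ from the last column. Thus $\Phi_k$ is an isomorphism onto its image. That image is described inside the linear subspace
\[
L_k=\{T:\ T_{pq}=0 \text{ off the diagonal and last column},\ T_{ii}=0\ (i<k),\ T_{ii}=2^{k-i}T_{kk}\ (k\le i<n),\ T_{nn}\ \text{free}\}
\]
by the open condition $T_{kk}\ne0$. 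Since $L_k\cong\K^{n+1}$, the set $\mathcal V_k$ is a nonempty open subset of an irreducible affine space. It is therefore quasi-affine and irreducible, of dimension $n+1$, which is item (2). Here I should be careful about one point: $T_{nn}=c_n$ belongs to $\mathbf c$, and $c_k$ is the last-column entry in row $k$. So the parameter count is $n$ entries of $\mathbf c$ plus $\alpha$, giving $n+1$.

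For item (3), the closure of the nonempty open subset $\{T_{kk}\ne0\}$ of the irreducible space $L_k$ is all of $L_k$. The complement $L_k\cap\{T_{kk}=0\}$ forces every diagonal entry $T_{ii}$ with $k\le i<n$ to vanish. The remaining free coordinates are exactly the last column, so the complement is $\mathcal V_0$. Hence $\overline{\mathcal V_k}=L_k=\mathcal V_k\sqcup\mathcal V_0$. This requires no characteristic assumption beyond the field being infinite, so that Zariski density of the open set holds; characteristic zero guarantees this.

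Item (4) follows because $\LRB_0(\E)=\bigcup_k\overline{\mathcal V_k}$ (together with $\mathcal V_0$ when $i_0=n-1$) is a finite union of irreducible closed sets. Its dimension is therefore the maximum of the dimensions of its components, namely $n+1$ when some $k$ exists.

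I expect the main subtlety to be this edge case. The index range $\{i_0+1,\dots,n-1\}$ is nonempty exactly when $i_0\le n-2$. The definition of $\mathcal I_A$ gives $i\le n-3$, so $i_0\le n-3$ and the range always contains $k=n-1$. Hence the dimension really is $n+1$. Also, $\LRB_0(\E)$ is closed, being the union of the closures, so the dimension is that of a variety.
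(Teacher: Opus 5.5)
Your proposal is correct and follows essentially the same route as the paper. You read off the decomposition and disjointness from Theorem~\ref{thm:main-local} via the first nonzero diagonal entry, use the injective parametrization $(\alpha,\mathbf c)\mapsto T^{(k)}_{\alpha,\mathbf c}$ for item (2), obtain the closure by allowing $\alpha=0$, and take the dimension as the maximum over the strata. Your explicit identification $\overline{\mathcal V_k}=L_k$, the remark that density only needs $\K$ infinite, and the check that $i_0\le n-3$ guarantees a nonempty range of $k$ are details the paper leaves implicit.
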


\begin{proof}
By Theorem~\ref{thm:main-local}, every operator $T\in \LRB_0(\E)$ is of the form
\begin{equation}\label{eq:LRB-normal-form}
T({\bf e}_i)=
\begin{cases}
c_i {\bf e}_n, & 1\le i<k,\\[1mm]
2^{\,n-i}\alpha\, {\bf e}_i+c_i {\bf e}_n, & k\le i<n,\\[1mm]
c_n {\bf e}_n, & i=n,
\end{cases}
\end{equation}
for some $k\in\{i_0+1,\dots,n-1\}$ and some scalars $\alpha,c_1,\dots,c_n\in \K$.

If $\alpha=0$, then \eqref{eq:LRB-normal-form} reduces to
\[
T({\bf e}_i)=c_i {\bf e}_n \qquad (1\le i\le n),
\]
independently of the choice of $k$. Thus all such operators form precisely the family $\mathcal{V}_0$.

If $\alpha\neq 0$, then the first index at which a nonzero diagonal coefficient appears is uniquely determined; namely, it is exactly $k$, because
\[
T({\bf e}_i)\in \K {\bf e}_n \quad (i<k),
\qquad
T({\bf e}_i)\notin \K {\bf e}_n \quad (k\le i<n).
\]
Hence every local Rota--Baxter operator with nonzero diagonal part belongs to a unique $\mathcal{V}_k$, where $k\in\{i_0+1,\dots,n-1\}$. This proves the disjoint decomposition \eqref{eq:LRB-stratification}.

The set $\mathcal{V}_0$ is clearly an affine subspace, with free parameters $c_1,\dots,c_n$, so $\mathcal{V}_0\cong \K^n$ and $\dim \mathcal{V}_0=n$.

Fix $k\in\{i_0+1,\dots,n-1\}$. The assignment
\[
(\alpha,c_1,\dots,c_n)\longmapsto T
\]
with $\alpha\in \K^\times$ and $c_1,\dots,c_n\in \K$, where $T$ is given by \eqref{eq:LRB-normal-form}, is injective and its image is exactly $\mathcal{V}_k$. Therefore
\[
\mathcal{V}_k\cong \K^\times\times \K^n,
\]
so $\mathcal{V}_k$ is irreducible and quasi-affine of dimension $n+1$.

Its Zariski closure is obtained by allowing $\alpha=0$, which gives precisely the operators in $\mathcal{V}_0$. Hence
\[
\overline{\mathcal{V}_k}=\mathcal{V}_k\cup \mathcal{V}_0.
\]

Finally, since $\LRB_0(\E)$ is the finite union of the strata $\mathcal{V}_0,\mathcal{V}_{i_0+1},\dots,\mathcal{V}_{n-1}$, its dimension is the maximum of their dimensions, namely
\[
\dim \LRB_0(\E)=n+1.
\]
\end{proof}

\begin{remark}
The proposition shows that $\LRB_0(\E)$ is naturally stratified by the first index $k$ at which the diagonal geometric progression begins. The stratum $\mathcal{V}_0$ corresponds to operators whose image is contained in $\K{\bf e}_n$, while each $\mathcal{V}_k$ for $k\le n-1$ consists of operators whose diagonal part begins exactly at ${\bf e}_k$.

In particular, $\LRB_0(\E)$ is generally \emph{not} a vector subspace of $\End(\E)$: it is a finite union of quasi-affine pieces, all of dimension at most $n+1$, glued along the common affine subspace $\mathcal{V}_0$.
\end{remark}

\begin{corollary}
The ordinary Rota--Baxter operators form the subset
\[
\RB_0(\E)
=
\mathcal{V}_0^{\,\mathrm{ord}}
\;\sqcup\;
\bigcup_{k=i_0+1}^{n-1}\mathcal{W}_k,
\]
where \(\mathcal V_0^{\mathrm{ord}}=\mathcal V_0\), and, for each
\(k\in\{i_0+1,\dots,n-1\}\), the stratum \(\mathcal W_k\) consists of the maps
\(S_{\alpha,\mathbf c}^{(k)}\), with \(\alpha\in\K^\times\) and
\(\mathbf c=(c_1,\dots,c_k)\in\K^k\), defined by
\[
S_{\alpha,\mathbf c}^{(k)}({\bf e}_i)=
\begin{cases}
c_i {\bf e}_n, & 1\le i<k,\\[1mm]
2^{n-k}\alpha {\bf e}_k+c_k {\bf e}_n, & i=k,\\[1mm]
2^{n-i}\alpha {\bf e}_i, & k<i<n,\\[1mm]
\alpha {\bf e}_n, & i=n.
\end{cases}
\]

Hence
\[
\dim \RB_0(\E)=\max\{n,\;k+1: i_0+1\le k\le n-1\}=n,
\]
whereas
\[
\dim \LRB_0(\E)=n+1.
\]
Thus the local parameter space is one dimension larger than the ordinary one.
\end{corollary}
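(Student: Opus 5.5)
The plan is to read the decomposition of $\RB_0(\E)$ directly off Theorem~\ref{thm:ordinary-classification}, in the same way that the preceding proposition was read off Theorem~\ref{thm:main-local}. The dimension count then comes from identifying each stratum with an explicit quasi-affine parameter space.

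\emph{Step 1: decomposition.} Alternative (i) of Theorem~\ref{thm:ordinary-classification} consists exactly of the maps $T_{\mathbf c}$ with $\mathbf c=(\beta_1,\dots,\beta_n)$, so it gives $\mathcal V_0^{\mathrm{ord}}=\mathcal V_0$. Alternative (ii), with threshold $k\in\{i_0+1,\dots,n-1\}$, scalar $\alpha\in\K^\times$ and $c_i:=\beta_i$ for $1\le i\le k$, gives exactly the maps $S^{(k)}_{\alpha,\mathbf c}$. Here the value $i=n$ in the formula $2^{n-i}\alpha\,{\bf e}_i$ yields $S^{(k)}_{\alpha,\mathbf c}({\bf e}_n)=\alpha{\bf e}_n$. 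Hence $\RB_0(\E)$ is the union of $\mathcal V_0$ and the sets $\mathcal W_k$.

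\emph{Step 2: disjointness.} Each $\mathcal W_k$ is disjoint from $\mathcal V_0$, because $S^{(k)}_{\alpha,\mathbf c}({\bf e}_k)$ has ${\bf e}_k$-coefficient $2^{n-k}\alpha\neq0$ and $k<n$. The sets $\mathcal W_k$ are also pairwise disjoint: $k$ is recovered as the least index $i<n$ with $S({\bf e}_i)\notin\K{\bf e}_n$, exactly as in the preceding proof.

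\emph{Step 3: dimensions.} Consider the map
\[
\K^\times\times\K^k\to\End(\E),\qquad (\alpha,\mathbf c)\mapsto S^{(k)}_{\alpha,\mathbf c}.
\]
It is the restriction of an injective linear map $\K^{k+1}\to\End(\E)$ to the open set $\{\alpha\neq0\}$. The parameters can be read off linearly: $\alpha$ is the ${\bf e}_n$-entry of $S({\bf e}_n)$, and $c_i$ is the ${\bf e}_n$-entry of $S({\bf e}_i)$ for $i\le k$. Its image is therefore an open subset of a $(k+1)$-dimensional linear subspace. So $\mathcal W_k$ is irreducible, quasi-affine, and of dimension $k+1$, while $\dim\mathcal V_0=n$. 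Since the dimension of a finite union of locally closed sets is the maximum of their dimensions, we get
\[
\dim\RB_0(\E)=\max\{n,\,k+1: i_0+1\le k\le n-1\}.
\]
This maximum is $n$: $k\le n-1$ gives $k+1\le n$, and the range of $k$ is nonempty because $i_0\le n-3$ by \eqref{eq:IA}. The value $n$ is in fact attained both by $\mathcal V_0$ and by $\mathcal W_{n-1}$. Comparing with part (4) of the preceding proposition, which gives $\dim\LRB_0(\E)=n+1$, yields the final claim.

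The only step needing some care is the dimension bookkeeping, namely that the strata are locally closed and that dimension is the maximum over a finite union. I would handle it as above, by exhibiting each stratum as the image of a basic open subset of an affine space under an injective linear map. The remaining steps are direct translations of Theorem~\ref{thm:ordinary-classification}.
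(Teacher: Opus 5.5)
Your proposal is correct and takes essentially the same route as the paper: both read the strata directly off Theorem~\ref{thm:ordinary-classification}, count $\dim\mathcal V_0=n$ and $\dim\mathcal W_k=k+1$, and take the maximum, which is $n$ and is attained at $k=n-1$. Your version makes explicit the bookkeeping that the paper leaves implicit, namely disjointness of the strata, each $\mathcal W_k$ being an open subset of a $(k+1)$-dimensional linear subspace, and nonemptiness of the range of $k$ via $i_0\le n-3$.
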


\begin{proof}
This follows from Theorem~\ref{thm:ordinary-classification}. The annihilator-valued ordinary family is exactly $\mathcal V_0$, since every operator with image contained in $\K{\bf e}_n$ is ordinary. Hence it has dimension $n$.

For a fixed nonzero threshold $k\le n-1$, the ordinary family is parametrized by $\alpha\in\K^\times$ and $c_1,\dots,c_k\in\K$, hence has dimension $k+1$. The largest value is obtained at $k=n-1$, where it equals $n$. Therefore $\dim\RB_0(\E)=n$, while the preceding proposition gives $\dim\LRB_0(\E)=n+1$.
\end{proof}

\section{Commuting Rota--Baxter operators}\label{sec:commuting}

In this section we use the convention
\[
[S,T]=ST-TS
\]
for the commutator of two linear endomorphisms.  We keep the reduced normal form
\eqref{eq:natural-form}.  In a more general triangular presentation of nilpotent
maximal-nilindex evolution algebras, additional coefficients in the \({\bf e}_n\)-column
may occur; see \cite{DerAut}.  In the reduced normal form used here those coefficients
are zero.  Consequently, the general derivation and automorphism formulas specialize to
the following simple form.

\begin{proposition}\label{prop:der-auto-forms}
Let \(\E\) be given by \eqref{eq:natural-form}.
\begin{enumerate}[label=\textup{(\alph*)}]
\item If \(\mathcal I_A\neq\varnothing\), then every derivation \(D\in\Der(\E)\) has the form
\[
D({\bf e}_1)=\nu{\bf e}_n,
\qquad
D({\bf e}_i)={\bf 0}\quad (2\le i\le n)
\]
for some \(\nu\in\K\).

If \(\mathcal I_A=\varnothing\), then every derivation \(D\in\Der(\E)\) has the form
\[
D({\bf e}_1)=\mu{\bf e}_1+\nu{\bf e}_n,
\qquad
D({\bf e}_i)=2^{i-1}\mu{\bf e}_i\quad (2\le i\le n)
\]
for some \(\mu,\nu\in\K\).

\item If \(\phi\in\Aut(\E)\), then there exist \(\rho\in\K^\times\) and \(\gamma\in\K\) such that
\[
\phi({\bf e}_1)=\rho{\bf e}_1+\gamma{\bf e}_n,
\qquad
\phi({\bf e}_i)=\rho^{2^{i-1}}{\bf e}_i\quad (2\le i\le n).
\]
If \(\mathcal I_A\neq\varnothing\), then
\[
\rho^\eta=1,
\qquad
\eta=\gcd\{2^{j-1}-2^i:(i,j)\in\mathcal I_A\}.
\]
If \(\mathcal I_A=\varnothing\), there is no further restriction on \(\rho\) besides
\(\rho\ne0\).
\end{enumerate}
\end{proposition}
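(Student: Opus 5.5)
We would prove both parts by writing the map in the natural basis and comparing coefficients in the defining identities, in three steps: first force the matrix to be diagonal plus last column, then pin down the diagonal, then pin down the last column. Two structural facts of \eqref{eq:natural-form} are used throughout.
\begin{itemize}
\item[(F1)] The vectors ${\bf e}_1^2,\dots,{\bf e}_{n-1}^2$ are linearly independent, since ${\bf e}_i^2$ has leading term ${\bf e}_{i+1}$, while ${\bf e}_n^2={\bf 0}$.
\item[(F2)] For $i\le n-2$, the square ${\bf e}_i^2$ has no ${\bf e}_n$-component.
\end{itemize}

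\emph{Derivations.} Write $D({\bf e}_i)=\sum_j d_{ij}{\bf e}_j$.
\begin{enumerate}
\item For $i\ne j$, applying $D$ to ${\bf e}_i{\bf e}_j={\bf 0}$ gives $d_{ij}{\bf e}_j^2+d_{ji}{\bf e}_i^2={\bf 0}$. By (F1) this forces $d_{ij}=0$ whenever $i\ne j$ and $j<n$. So $D$ has nonzero entries only on the diagonal and in the last column.
\item Apply $D({\bf e}_i^2)=2{\bf e}_iD({\bf e}_i)=2d_{ii}{\bf e}_i^2$ and compare coefficients.
 \begin{itemize}
 \item The ${\bf e}_{i+1}$-coefficient gives $d_{i+1,i+1}=2d_{ii}$ (the case $i=n-1$ gives $d_{nn}=2d_{n-1,n-1}$). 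Setting $\mu=d_{11}$, this yields $d_{ii}=2^{i-1}\mu$.
 \item The ${\bf e}_j$-coefficient for $(i,j)\in\mathcal I_A$ gives $a_{ij}2^{j-1}\mu=a_{ij}2^i\mu$. Since $j>i+1$, this forces $\mu=0$ as soon as $\mathcal I_A\neq\varnothing$.
 \item The ${\bf e}_n$-coefficient, using (F2), gives $d_{i+1,n}+\sum_{j}a_{ij}d_{jn}=0$ for $i\le n-2$.
 \end{itemize}
\item Solve the last relation by downward induction. The case $i=n-2$ gives $d_{n-1,n}=0$. Each earlier $i$ then expresses $d_{i+1,n}$ through entries $d_{jn}$ with $j>i+1$, which already vanish. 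Hence $d_{jn}=0$ for all $2\le j\le n-1$, while $\nu:=d_{1n}$ stays free.
\end{enumerate}

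\emph{Automorphisms.} Write $\phi({\bf e}_i)=\sum_k p_{ik}{\bf e}_k$.
\begin{enumerate}
\item Orthogonality $\phi({\bf e}_i)\phi({\bf e}_j)={\bf 0}$ for $i\ne j$ reads $\sum_{k<n}p_{ik}p_{jk}{\bf e}_k^2={\bf 0}$. By (F1), $p_{ik}p_{jk}=0$ for $k<n$ and $i\neq j$. So every column $k<n$ has at most one nonzero entry.
\item The powers $\E^m=\operatorname{span}\{{\bf e}_m,\dots,{\bf e}_n\}$ are characteristic, hence $\phi$-invariant. Therefore the matrix is upper triangular; in particular $p_{ik}=0$ for $k<i$, and $p_{n\ast}$ is concentrated at $p_{nn}$.
\item Upper triangularity and invertibility make every $p_{kk}\neq0$. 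By step 1, column $k<n$ then has no other nonzero entries, so $\phi$ is again diagonal plus last column.
\item Now compare coefficients in $\phi({\bf e}_i)^2=\phi({\bf e}_i^2)$, exactly as for derivations.
 \begin{itemize}
 \item The ${\bf e}_{i+1}$-coefficient gives $p_{i+1,i+1}=p_{ii}^2$, so $p_{ii}=\rho^{2^{i-1}}$ with $\rho=p_{11}$.
 \item For $(i,j)\in\mathcal I_A$ it gives $\rho^{2^{j-1}}=\rho^{2^i}$, i.e. $\rho^{2^{j-1}-2^i}=1$.
 \item The ${\bf e}_n$-coefficient, with the same downward induction as before, kills $p_{jn}$ for $2\le j\le n-1$ and leaves $\gamma:=p_{1n}$ free.
 \end{itemize}
\item Finally, the family of relations $\rho^{2^{j-1}-2^i}=1$ is equivalent to the single relation $\rho^\eta=1$, by Bézout applied to the exponents.
\end{enumerate}

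The main obstacle is steps 1--3 for $\phi$: reducing the nonlinear orthogonality conditions to a diagonal-plus-last-column shape. This needs the invariance of the filtration $\E^m$ together with invertibility, rather than a purely linear comparison; everything after that is a direct coefficient check.
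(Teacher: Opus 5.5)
Your proof is correct, but it follows a different route from the paper. The paper gives no direct argument. It quotes the general derivation and automorphism formulas of \cite[Theorems~3.2 and~5.1]{DerAut} for the unreduced triangular form. It then notes that the recursive sink terms there are built from the ${\bf e}_n$-coefficients of ${\bf e}_i^2$ with $i\le n-2$, and these vanish in \eqref{eq:natural-form}.

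You instead derive everything from scratch:
\begin{itemize}
\item (F1) removes all entries outside the diagonal and the last column. For $D$ this is immediate; for $\phi$ it follows once (F1) is combined with the invariant flag and invertibility.
\item The ${\bf e}_{i+1}$- and ${\bf e}_j$-coefficients give the doubling (for $D$) or squaring (for $\phi$) law on the diagonal, and the $\mathcal I_A$ constraints.
\item (F2) is exactly the reduced-normal-form input that makes the last-column recursion homogeneous. Downward induction then leaves only $d_{1n}=\nu$ (resp.\ $p_{1n}=\gamma$) free.
\end{itemize}
The citation is shorter and places the result inside the general unreduced picture. Your argument is self-contained and does not depend on correctly specializing an external theorem. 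It also shows exactly where the normal form is used, and where characteristic zero is needed (to have $2^{j-1}\neq 2^i$ in $\K$).

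One notational repair: do not call the flag ``the powers $\E^m$''. With the usual powers $\E^m=\sum_{i+j=m}\E^i\E^j$, an algebra of maximal nilindex $2^{n-1}+1$ has $\E^4=\operatorname{span}\{{\bf e}_3,\dots,{\bf e}_n\}$, not $\operatorname{span}\{{\bf e}_4,\dots,{\bf e}_n\}$. Use the principal powers instead: $V_1=\E$ and $V_{m+1}=V_m\E$. By (F1) and triangularity, $V_m\E=\operatorname{span}\{{\bf e}_l^2:l\ge m\}=\operatorname{span}\{{\bf e}_{m+1},\dots,{\bf e}_n\}$. Since $\phi(V_m\E)=\phi(V_m)\E$, induction gives $\phi(V_m)=V_m$, which is all your step 2 needs.
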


\begin{proof}
This is precisely the specialization of the derivation and automorphism formulas from
\cite[Theorems~3.2 and~5.1]{DerAut} to the reduced normal form \eqref{eq:natural-form}.
Indeed, the possible recursive sink terms in the unreduced triangular form are multiples
of coefficients of \({\bf e}_n\) in the products \({\bf e}_i^2\) with \(i\le n-2\); these
coefficients vanish in \eqref{eq:natural-form}.  Thus only the free first-row sink
coefficient remains.
\end{proof}

For later use we also isolate the ordinary weight-zero classification with notation
adapted to commutator computations.

\begin{proposition}\label{prop:RB0-form}
A linear operator \(R\) belongs to \(\RB_0(\E)\) if and only if exactly one of the following alternatives holds.
\begin{enumerate}[label=\textup{(\roman*)}]
\item \emph{Annihilator-valued case.}  There exist
\(\beta_1,\dots,\beta_{n-1},\lambda\in\K\) such that
\[
R({\bf e}_i)=\beta_i{\bf e}_n\quad (1\le i<n),
\qquad
R({\bf e}_n)=\lambda{\bf e}_n.
\]

\item \emph{Nonzero threshold case.}  There exist
\[
k\in\{i_0+1,\dots,n-1\},
\qquad
\lambda\in\K^\times,
\qquad
\beta_1,\dots,\beta_k\in\K,
\]
such that
\[
R({\bf e}_i)=
\begin{cases}
\beta_i{\bf e}_n, & 1\le i<k,\\[1mm]
2^{n-k}\lambda{\bf e}_k+\beta_k{\bf e}_n, & i=k,\\[1mm]
2^{n-i}\lambda{\bf e}_i, & k<i\le n.
\end{cases}
\]
\end{enumerate}
\end{proposition}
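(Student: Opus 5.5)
This proposition is a relabelling of Theorem~\ref{thm:ordinary-classification}, so the plan is to translate each alternative into the other. The one point to check is the image of ${\bf e}_n$, since this is where the two parametrizations differ visibly.

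\emph{Case (i).} The annihilator-valued alternative of Theorem~\ref{thm:ordinary-classification} reads $R({\bf e}_i)=\beta_i{\bf e}_n$ for $1\le i\le n$. We rename $\beta_n$ as $\lambda$, and this is exactly case (i) here. Conversely, any choice of $\beta_1,\dots,\beta_{n-1},\lambda$ gives such an operator.

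\emph{Case (ii).} We take the nonzero-threshold alternative with threshold $k\in\{i_0+1,\dots,n-1\}$, scalar $\alpha\in\K^\times$ and coefficients $\beta_1,\dots,\beta_k$, and set $\lambda:=\alpha$. The formulas for $i<k$ and $i=k$ agree verbatim. For $k<i\le n$ the theorem gives $R({\bf e}_i)=2^{n-i}\alpha{\bf e}_i$. At $i=n$ this is $2^0\alpha{\bf e}_n=\lambda{\bf e}_n$, which matches the formula $2^{n-i}\lambda{\bf e}_i$ evaluated at $i=n$. So the alternative translates exactly, and the reverse substitution $\alpha:=\lambda$ gives the converse.

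\emph{Exclusivity.} The two alternatives are mutually exclusive, just as in Theorem~\ref{thm:ordinary-classification}. In case (ii) we have $\lambda\neq0$, so $R({\bf e}_k)$ has nonzero ${\bf e}_k$-coefficient with $k<n$, whereas in case (i) the image lies in $\K{\bf e}_n$. The words ``exactly one'' therefore carry over unchanged.

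There is no real obstacle. The only point needing care is the $i=n$ row in case (ii), checked above: Theorem~\ref{thm:ordinary-classification} does not list $R({\bf e}_n)$ separately but includes it in the range $k<i\le n$, which forces $R({\bf e}_n)=\lambda{\bf e}_n$ with the same $\lambda$ as the diagonal string.
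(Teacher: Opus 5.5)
Your proposal is correct and is exactly the paper's argument: the proposition is Theorem~\ref{thm:ordinary-classification} with \(\lambda\) taken to be the \({\bf e}_n\)-coefficient of \(R({\bf e}_n)\). That is, \(\lambda=\beta_n\) in the annihilator-valued case and \(\lambda=2^{0}\alpha=\alpha\) in the threshold case; your checks of the \(i=n\) row and of exclusivity (via \(2^{n-k}\lambda\neq0\) in characteristic zero) spell out what the paper leaves implicit.
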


\begin{proof}
This is Theorem~\ref{thm:ordinary-classification}, with \(\lambda\) denoting the coefficient
of \({\bf e}_n\) in \(R({\bf e}_n)\).
\end{proof}

\begin{theorem}\label{thm:RB0-comm-der}
Let \(D\in\Der(\E)\), and let \(R\in\RB_0(\E)\) be written as in
Proposition~\ref{prop:RB0-form}.

\begin{enumerate}[label=\textup{(\arabic*)}]
\item Suppose \(\mathcal I_A\neq\varnothing\).  Write
\[
D({\bf e}_1)=\nu{\bf e}_n,
\qquad
D({\bf e}_i)={\bf 0}\quad(2\le i\le n).
\]
Then
\[
[R,D]=0 \quad\Longleftrightarrow\quad \nu\lambda=0.
\]
Thus, if \(\nu=0\), every ordinary weight-zero Rota--Baxter operator commutes with
\(D\); if \(\nu\ne0\), the commuting operators are exactly the annihilator-valued
operators with \(R({\bf e}_n)=0\).

\item Suppose \(\mathcal I_A=\varnothing\).  Write
\[
D({\bf e}_1)=\mu{\bf e}_1+\nu{\bf e}_n,
\qquad
D({\bf e}_i)=2^{i-1}\mu{\bf e}_i\quad(2\le i\le n),
\]
and put
\[
\tau_1=\nu,
\qquad
\tau_i=0\quad(2\le i<n).
\]
If \(R\) is annihilator-valued, say
\[
R({\bf e}_i)=\beta_i{\bf e}_n\quad(1\le i<n),
\qquad R({\bf e}_n)=\lambda{\bf e}_n,
\]
then \([R,D]=0\) if and only if
\begin{equation}\label{eq:comm-der-ann}
\beta_i(2^{i-1}-2^{n-1})\mu+\tau_i\lambda=0
\qquad(1\le i<n).
\end{equation}

If \(R\) has nonzero threshold \(k<n\), then \([R,D]=0\) if and only if
\begin{align}
\beta_i(2^{i-1}-2^{n-1})\mu+\tau_i\lambda&=0,
&&1\le i<k, \label{eq:comm-der-before}\\
\beta_k(2^{k-1}-2^{n-1})\mu+(1-2^{n-k})\tau_k\lambda&=0.&& \label{eq:comm-der-threshold}
\end{align}
There are no further conditions for \(k<i<n\); for those indices the commutator
vanishes automatically in the reduced normal form.
\end{enumerate}
\end{theorem}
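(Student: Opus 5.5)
The plan is a direct computation: evaluate both $RD$ and $DR$ on each basis vector ${\bf e}_i$ and compare coefficients. This is possible because Propositions~\ref{prop:der-auto-forms} and~\ref{prop:RB0-form} give $D$ and $R$ completely explicitly. In both forms every image lies in $\K{\bf e}_i+\K{\bf e}_n$, so each commutator value $[R,D]({\bf e}_i)$ has at most an ${\bf e}_i$-component and an ${\bf e}_n$-component. We only need to check when these vanish. One fact is used repeatedly: in both alternatives of Proposition~\ref{prop:RB0-form} we have $R({\bf e}_n)=\lambda{\bf e}_n$, since $2^{0}\lambda=\lambda$ in the threshold case.

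\emph{Case $\mathcal I_A\neq\varnothing$.} Here $i_0\ge1$, so any threshold satisfies $k\ge2$. Hence no $R({\bf e}_i)$ has an ${\bf e}_1$-component. Since $D$ kills ${\bf e}_2,\dots,{\bf e}_n$, this gives $DR=0$ identically. For $RD$, we get $RD({\bf e}_i)=0$ for $i\ge2$ and $RD({\bf e}_1)=\nu R({\bf e}_n)=\nu\lambda{\bf e}_n$. So $[R,D]=0$ if and only if $\nu\lambda=0$. The two descriptive consequences then follow: if $\nu=0$ everything commutes; if $\nu\ne0$ we need $\lambda=0$, which is impossible in the threshold case because $\lambda\in\K^\times$ there.

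\emph{Case $\mathcal I_A=\varnothing$.} Now $D({\bf e}_i)=2^{i-1}\mu{\bf e}_i+\tau_i{\bf e}_n$ for $i<n$ and $D({\bf e}_n)=2^{n-1}\mu{\bf e}_n$, which is the reason for introducing the $\tau_i$. I will compute $[R,D]({\bf e}_i)$ in three ranges.
\begin{itemize}
\item \emph{Indices before the threshold} (all $i<n$ in the annihilator case). Here $RD({\bf e}_i)=(2^{i-1}\mu\beta_i+\tau_i\lambda){\bf e}_n$ and $DR({\bf e}_i)=\beta_i2^{n-1}\mu{\bf e}_n$. Subtracting gives \eqref{eq:comm-der-ann} and \eqref{eq:comm-der-before}.
\item \emph{The index $k$.} The ${\bf e}_k$-components of $RD({\bf e}_k)$ and $DR({\bf e}_k)$ are both $2^{n-k}2^{k-1}\mu\lambda$, so they cancel. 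The ${\bf e}_n$-components are $2^{k-1}\mu\beta_k+\tau_k\lambda$ from $RD$ and $2^{n-1}\mu\beta_k+2^{n-k}\lambda\tau_k$ from $DR$. The last term comes from the $\nu{\bf e}_n$-part of $D({\bf e}_1)$ when $k=1$. Subtracting gives \eqref{eq:comm-der-threshold}.
\item \emph{Indices $k<i\le n$.} Both $R$ and $D$ act on ${\bf e}_i$ as scalars, since $\tau_i=0$ for $i\ge2$. So they commute, which is the final assertion about $k<i<n$.
\end{itemize}
Finally, ${\bf e}_n$ imposes no condition: it is an eigenvector of both $R$ and $D$.

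The main obstacle is purely bookkeeping at the threshold index. One must keep track of the extra sink term $\nu{\bf e}_n$ in $D({\bf e}_1)$ when $k=1$, which is where the factor $(1-2^{n-k})$ comes from. The uniform notation $\tau_i$ absorbs this, so the cases $k=1$ and $k\ge2$ are handled simultaneously.
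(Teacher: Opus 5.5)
Your proposal is correct and follows the paper's proof essentially verbatim. Both compare $RD$ and $DR$ on each basis vector, package the sink term of $D({\bf e}_1)$ as $\tau_1=\nu$, and dispose of the tail using $\tau_i=0$ for $i\ge2$. Your explicit observation that $\mathcal I_A\neq\varnothing$ forces $i_0\ge1$, hence $k\ge2$ and $DR=0$, fills in a step the paper leaves implicit.
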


\begin{proof}
The case \(\mathcal I_A\neq\varnothing\) follows immediately from
Proposition~\ref{prop:der-auto-forms}.  Since \(D({\bf e}_i)=0\) for \(i>1\) and
\(D({\bf e}_n)=0\), we have \([R,D]({\bf e}_i)=0\) for \(i>1\), while
\[
[R,D]({\bf e}_1)=R(\nu{\bf e}_n)-D(R({\bf e}_1))=\nu\lambda{\bf e}_n.
\]
Hence the condition is exactly \(\nu\lambda=0\).

Assume now that \(\mathcal I_A=\varnothing\).  For \(1\le i<n\) we may write
\[
D({\bf e}_i)=2^{i-1}\mu{\bf e}_i+\tau_i{\bf e}_n,
\]
where \(\tau_1=\nu\) and \(\tau_i=0\) for \(2\le i<n\).  Also
\(D({\bf e}_n)=2^{n-1}\mu{\bf e}_n\).

If \(R\) is annihilator-valued and \(i<n\), then
\[
RD({\bf e}_i)=\bigl(2^{i-1}\mu\beta_i+\tau_i\lambda\bigr){\bf e}_n,
\qquad
DR({\bf e}_i)=2^{n-1}\mu\beta_i{\bf e}_n.
\]
Thus \([R,D]({\bf e}_i)=0\) is exactly \eqref{eq:comm-der-ann}; the vector
\({\bf e}_n\) gives no condition.

Now suppose that \(R\) has nonzero threshold \(k\).  For \(i<k\) the same computation gives
\eqref{eq:comm-der-before}.  At the threshold,
\[
R({\bf e}_k)=2^{n-k}\lambda{\bf e}_k+\beta_k{\bf e}_n,
\]
and hence
\[
RD({\bf e}_k)=2^{n-1}\mu\lambda{\bf e}_k+
\bigl(2^{k-1}\mu\beta_k+\tau_k\lambda\bigr){\bf e}_n,
\]
whereas
\[
DR({\bf e}_k)=2^{n-1}\mu\lambda{\bf e}_k+
\bigl(2^{n-k}\lambda\tau_k+2^{n-1}\mu\beta_k\bigr){\bf e}_n.
\]
Their equality is equivalent to \eqref{eq:comm-der-threshold}.  Finally, for
\(k<i<n\),
\[
[R,D]({\bf e}_i)=(1-2^{n-i})\tau_i\lambda{\bf e}_n={\bf 0},
\]
because \(\tau_i=0\) on the tail.  This proves the theorem.
\end{proof}

\begin{theorem}\label{thm:RB0-comm-auto}
Let \(\phi\in\Aut(\E)\), and let \(R\in\RB_0(\E)\) be written as in
Proposition~\ref{prop:RB0-form}.  Write
\[
\phi({\bf e}_i)=\delta_i{\bf e}_i+\tau_i{\bf e}_n\quad(1\le i<n),
\qquad
\phi({\bf e}_n)=\delta_n{\bf e}_n,
\]
where
\[
\delta_i=\rho^{2^{i-1}}\quad(1\le i\le n),
\qquad
\tau_1=\gamma,
\qquad
\tau_i=0\quad(2\le i<n)
\]
as in Proposition~\ref{prop:der-auto-forms}.  Then the following hold.

\begin{enumerate}[label=\textup{(\roman*)}]
\item If \(R\) is annihilator-valued, then \([R,\phi]=0\) if and only if
\begin{equation}\label{eq:comm-auto-ann}
\beta_i(\delta_i-\delta_n)+\tau_i\lambda=0
\qquad(1\le i<n).
\end{equation}

\item If \(R\) has nonzero threshold \(k<n\), then \([R,\phi]=0\) if and only if
\begin{align}
\beta_i(\delta_i-\delta_n)+\tau_i\lambda&=0,
&&1\le i<k, \label{eq:comm-auto-before}\\
\beta_k(\delta_k-\delta_n)+(1-2^{n-k})\tau_k\lambda&=0.&& \label{eq:comm-auto-threshold}
\end{align}
Again, there are no further conditions for \(k<i<n\) in the reduced normal form.
\end{enumerate}

In particular, these equations determine a coefficient \(\beta_i\) only when the
corresponding factor \(\delta_i-\delta_n\) is nonzero.  If \(\delta_i=\delta_n\), the
same equation becomes a compatibility condition on the sink term and leaves
\(\beta_i\) free whenever that compatibility condition is satisfied.
\end{theorem}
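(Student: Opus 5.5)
The plan is to mimic the proof of Theorem~\ref{thm:RB0-comm-der}: compute \([R,\phi]\) on each natural basis vector \({\bf e}_i\), using the explicit forms from Proposition~\ref{prop:der-auto-forms}(b) and Proposition~\ref{prop:RB0-form}. Since \(\phi\) is diagonal plus a last-column part, and so is \(R\), every image \(R\phi({\bf e}_i)\) and \(\phi R({\bf e}_i)\) lies in \(\K{\bf e}_i+\K{\bf e}_n\). So \([R,\phi]=0\) reduces to comparing at most two coefficients per index \(i\). Throughout, \(\lambda\) is the \({\bf e}_n\)-coefficient of \(R({\bf e}_n)\), as in Proposition~\ref{prop:RB0-form}.

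First, the sink \({\bf e}_n\) gives no condition. Indeed, \(R\phi({\bf e}_n)=\delta_n\lambda{\bf e}_n=\phi R({\bf e}_n)\) in both alternatives, because \(\phi({\bf e}_n)=\delta_n{\bf e}_n\) and \(R({\bf e}_n)=\lambda{\bf e}_n\).

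Next take an index \(i<n\) with \(R({\bf e}_i)=\beta_i{\bf e}_n\). This covers all \(i<n\) in the annihilator case and \(i<k\) in the threshold case. Then
\[
R\phi({\bf e}_i)=(\delta_i\beta_i+\tau_i\lambda){\bf e}_n,
\qquad
\phi R({\bf e}_i)=\beta_i\delta_n{\bf e}_n,
\]
and their equality is \eqref{eq:comm-auto-ann}, respectively \eqref{eq:comm-auto-before}.

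At the threshold \(i=k\) we use \(R({\bf e}_k)=2^{n-k}\lambda{\bf e}_k+\beta_k{\bf e}_n\). This gives
\[
R\phi({\bf e}_k)=2^{n-k}\delta_k\lambda{\bf e}_k+(\delta_k\beta_k+\tau_k\lambda){\bf e}_n,
\qquad
\phi R({\bf e}_k)=2^{n-k}\lambda\delta_k{\bf e}_k+(2^{n-k}\lambda\tau_k+\beta_k\delta_n){\bf e}_n.
\]
The \({\bf e}_k\)-coefficients agree automatically, and the \({\bf e}_n\)-coefficients agree exactly when \eqref{eq:comm-auto-threshold} holds.

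For the tail indices \(k<i<n\), the same computation without the \(\beta\) term gives \([R,\phi]({\bf e}_i)=(1-2^{n-i})\tau_i\lambda\,{\bf e}_n\). Since \(k\ge 1\), every such \(i\) satisfies \(i\ge2\), so \(\tau_i=0\) and the commutator vanishes. Collecting the index-wise conditions yields (i) and (ii).

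The closing remark is read off from the linear equations. If \(\delta_i\neq\delta_n\), each equation can be solved uniquely for \(\beta_i\). If \(\delta_i=\delta_n\), the equation no longer involves \(\beta_i\): it becomes \(\tau_i\lambda=0\) (or \((1-2^{n-k})\tau_k\lambda=0\) at the threshold), and \(\beta_i\) remains free.

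There is no real obstacle here. The only points needing care are the bookkeeping of the \({\bf e}_n\)-coefficient at the threshold, where the factor \(2^{n-k}\) produces the term \((1-2^{n-k})\tau_k\lambda\), and the observation that the tail indices never include \(i=1\), so that \(\tau_i=0\) there.
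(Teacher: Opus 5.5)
Your proposal is correct and follows the paper's proof essentially verbatim. It uses the same coefficient comparison of $R\phi({\bf e}_i)$ and $\phi R({\bf e}_i)$ at the pre-threshold indices, at the threshold (producing $(1-2^{n-k})\tau_k\lambda$), and on the tail, where $\tau_i=0$ because $i\ge 2$; you are only slightly more explicit in checking ${\bf e}_n$ in the threshold case and in spelling out the resonance remark when $\delta_i=\delta_n$.
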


\begin{proof}
For the annihilator-valued case and \(i<n\), one has
\[
R\phi({\bf e}_i)=(\delta_i\beta_i+\tau_i\lambda){\bf e}_n,
\qquad
\phi R({\bf e}_i)=\beta_i\delta_n{\bf e}_n.
\]
Thus \([R,\phi]({\bf e}_i)=0\) is equivalent to \eqref{eq:comm-auto-ann}; the basis
vector \({\bf e}_n\) gives no condition.

Assume now that \(R\) has nonzero threshold \(k\).  For \(i<k\) the same computation
gives \eqref{eq:comm-auto-before}.  At \(i=k\),
\[
R\phi({\bf e}_k)=2^{n-k}\delta_k\lambda{\bf e}_k+
(\delta_k\beta_k+\tau_k\lambda){\bf e}_n,
\]
whereas
\[
\phi R({\bf e}_k)=2^{n-k}\delta_k\lambda{\bf e}_k+
(2^{n-k}\tau_k\lambda+\beta_k\delta_n){\bf e}_n.
\]
Their equality is exactly \eqref{eq:comm-auto-threshold}.  Finally, for \(k<i<n\),
\[
[R,\phi]({\bf e}_i)=(1-2^{n-i})\tau_i\lambda{\bf e}_n={\bf 0},
\]
since \(\tau_i=0\) on the tail.  This proves the stated conditions and the final
assertion about resonances.
\end{proof}

\begin{definition}
For \(D\in\Der(\E)\) and \(\phi\in\Aut(\E)\), define
\[
\mathcal R_D=\{R\in\RB_0(\E):[R,D]=0\},
\qquad
\mathcal R_\phi=\{R\in\RB_0(\E):[R,\phi]=0\}.
\]
\end{definition}

\begin{proposition}\label{prop:nilpotent-derivation-commutant}
Let \(D\in\Der(\E)\) be of the form
\[
D({\bf e}_1)=\nu{\bf e}_n,
\qquad
D({\bf e}_i)={\bf 0}\quad(2\le i\le n).
\]
Then
\[
\mathcal R_D=
\begin{cases}
\RB_0(\E), & \nu=0,\\[1mm]
\{R\in\RB_0(\E):R({\bf e}_n)=0\}, & \nu\ne0.
\end{cases}
\]
In the second case, \(\mathcal R_D\) consists precisely of the annihilator-valued operators
\[
R({\bf e}_i)=\beta_i{\bf e}_n\quad(1\le i<n),
\qquad
R({\bf e}_n)=0.
\]
\end{proposition}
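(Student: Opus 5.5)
This is essentially a corollary of part (1) of Theorem~\ref{thm:RB0-comm-der}, so I would derive it from that theorem rather than recompute the commutator. The derivation $D$ has the nilpotent form stated. This is exactly the form described in Proposition~\ref{prop:der-auto-forms}(a) when $\mathcal I_A\neq\varnothing$. The computation in the proof of Theorem~\ref{thm:RB0-comm-der}(1) uses only this shape of $D$, not the hypothesis on $\mathcal I_A$, so it applies verbatim. For $i>1$ we have $D({\bf e}_i)={\bf 0}$, and $R({\bf e}_i)$ lies in $\K{\bf e}_i+\K{\bf e}_n$ with $i\ge 2$, so $D R({\bf e}_i)={\bf 0}$. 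For $i=1$ we get $[R,D]({\bf e}_1)=\nu R({\bf e}_n)-D(R({\bf e}_1))$. Here $R({\bf e}_1)\in\K{\bf e}_n$ unless $k=1$. If $k=1$, which requires $i_0=0$, then $R({\bf e}_1)$ has an ${\bf e}_1$-component $2^{n-1}\lambda$, and $D$ sends it to $2^{n-1}\lambda\nu{\bf e}_n$. I will flag this point and check it: the case $k=1$ must be treated separately, and there the commutator at ${\bf e}_1$ is $\nu\lambda(1-2^{n-1}){\bf e}_n$. This is still zero if and only if $\nu\lambda=0$, because $n\ge 3$ and the characteristic is zero.

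So in every case $[R,D]=0$ if and only if $\nu\lambda=0$, where $\lambda$ is the ${\bf e}_n$-coefficient of $R({\bf e}_n)$ from Proposition~\ref{prop:RB0-form}. If $\nu=0$, then $D=0$ and the commutant is all of $\RB_0(\E)$. If $\nu\neq0$, the condition becomes $\lambda=0$, that is, $R({\bf e}_n)=0$.

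It remains to identify $\{R\in\RB_0(\E):R({\bf e}_n)=0\}$. In the nonzero-threshold alternative of Proposition~\ref{prop:RB0-form} we have $R({\bf e}_n)=\lambda{\bf e}_n$ with $\lambda\in\K^\times$, so these operators are excluded. What remains are exactly the annihilator-valued operators with $\lambda=0$, which gives the displayed family.

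The only delicate step is the $k=1$ threshold case at ${\bf e}_1$, which the proof of Theorem~\ref{thm:RB0-comm-der}(1) handles only implicitly. I would treat it explicitly as above, using the factor $1-2^{n-1}\neq0$.
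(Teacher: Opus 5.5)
Your argument is correct and is essentially the paper's proof, which deduces the statement from Theorem~\ref{thm:RB0-comm-der}: it uses part (1) when $\mathcal I_A\neq\varnothing$ (so that $k\ge 2$) and part (2) with $\mu=0$ when $\mathcal I_A=\varnothing$. The $k=1$ case you treat by hand is exactly what part (2) covers, since with $\mu=0$ and $k=1$ its threshold equation \eqref{eq:comm-der-threshold} becomes $(1-2^{n-1})\nu\lambda=0$, which is your computation.
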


\begin{proof}
This is Theorem~\ref{thm:RB0-comm-der} with \(\mu=0\) when
\(\mathcal I_A=\varnothing\), and with the first case of that theorem when
\(\mathcal I_A\neq\varnothing\).
\end{proof}

\begin{proposition}
For \(D\) as in Proposition~\ref{prop:nilpotent-derivation-commutant}, one has
\[
\mathcal R_D=\mathcal R_{\exp(D)}.
\]
\end{proposition}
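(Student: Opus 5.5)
The plan is to compute $\exp(D)$ explicitly and then read off the commutation conditions from Theorem~\ref{thm:RB0-comm-auto}, comparing them with Proposition~\ref{prop:nilpotent-derivation-commutant}.

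\textbf{Step 1: compute $\exp(D)$.} Since $D({\bf e}_1)=\nu{\bf e}_n$ and $D$ kills ${\bf e}_2,\dots,{\bf e}_n$, we have $D^2=0$. Hence
\[
\exp(D)=\id+D,
\]
that is,
\[
\exp(D)({\bf e}_1)={\bf e}_1+\nu{\bf e}_n,\qquad \exp(D)({\bf e}_i)={\bf e}_i\quad(2\le i\le n).
\]
This is an automorphism in the form of Proposition~\ref{prop:der-auto-forms}(b), with $\rho=1$ and $\gamma=\nu$. So $\delta_i=1$ for all $i$, $\tau_1=\nu$, and $\tau_i=0$ for $i\ge 2$. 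Note that $\rho=1$ satisfies $\rho^\eta=1$, so this is consistent whether or not $\mathcal I_A$ is empty.

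Alternatively, and avoiding any reliance on Theorem~\ref{thm:RB0-comm-auto}, one can argue directly: $[R,\id+D]=[R,D]$, so $R$ commutes with $\exp(D)$ if and only if it commutes with $D$. This one-line argument gives $\mathcal R_D=\mathcal R_{\exp(D)}$ immediately, and I would present it as the proof.

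\textbf{Step 2: cross-check against Theorem~\ref{thm:RB0-comm-auto}.} Since $\delta_i-\delta_n=0$, the conditions there reduce to the following.
\begin{itemize}
\item In the annihilator-valued case: $\tau_1\lambda=\nu\lambda=0$.
\item In the threshold case: the condition is $\nu\lambda=0$ if $k>1$. If $k=1$, it is $(1-2^{n-1})\nu\lambda=0$, which is again equivalent to $\nu\lambda=0$ because $n\ge3$ and the characteristic is zero.
\end{itemize}
In the threshold case $\lambda\ne0$, so these conditions match Proposition~\ref{prop:nilpotent-derivation-commutant} exactly.

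\textbf{Main obstacle.} There is essentially none. The only subtle points are noticing that $D^2=0$, so the exponential is a finite sum (and no division issues arise in characteristic zero), and checking that $\id+D$ genuinely lies in $\Aut(\E)$.
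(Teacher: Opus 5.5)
Your proof is correct and is the same as the paper's: $D^2=0$ gives $\exp(D)=\id+D$, and hence $[R,\exp(D)]=[R,\id+D]=[R,D]$. Your cross-check through Theorem~\ref{thm:RB0-comm-auto} is consistent, including the $k=1$ case where you use $1-2^{n-1}\neq 0$, but the proof does not need it.
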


\begin{proof}
Here \(D^2=0\), so \(\exp(D)=\id+D\).  Therefore
\[
[R,\exp(D)]=[R,\id+D]=[R,D].
\]
\end{proof}

\subsection{Graph interpretation}

Let \(\Gamma(\E)\) be the directed graph with vertex set \(\{1,\dots,n\}\) and an edge
\(i\to j\) whenever the coefficient of \({\bf e}_j\) in \({\bf e}_i^2\) is nonzero.  In the
normal form \eqref{eq:natural-form}, the graph contains the chain
\[
1\to2\to\cdots\to n,
\]
possibly together with additional edges \(i\to j\) for \((i,j)\in\mathcal I_A\).  The
number \(i_0\) is the largest initial vertex of such an additional edge.  Thus the
threshold \(k\) of a nonzero ordinary or local Rota--Baxter operator must occur strictly
after all branching vertices encoded by \(\mathcal I_A\).

\begin{proposition}\label{prop:graph-interpretation}
Let \(R\in\RB_0(\E)\) have nonzero threshold \(k\).  Then for every interior tail vertex
\(i\in\{k+1,\dots,n-1\}\), every derivation \(D\in\Der(\E)\), and every automorphism
\(\phi\in\Aut(\E)\), one has
\[
[R,D]({\bf e}_i)={\bf 0},
\qquad
[R,\phi]({\bf e}_i)={\bf 0}.
\]
Consequently, in the reduced normal form the commutation problem is decided entirely
by the equations at the pre-threshold vertices \(i<k\) and at the threshold vertex \(i=k\).
\end{proposition}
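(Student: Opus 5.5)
We will verify the two identities by a direct computation on the basis vector ${\bf e}_i$ with $k<i<n$, using the explicit forms recorded in Proposition~\ref{prop:RB0-form} and Proposition~\ref{prop:der-auto-forms}. For such $i$ we have $R({\bf e}_i)=2^{n-i}\lambda{\bf e}_i$ and also $R({\bf e}_n)=\lambda{\bf e}_n$, since $n>k$ falls in the tail case of the nonzero threshold alternative. So on the span of ${\bf e}_i$ and ${\bf e}_n$ the operator $R$ acts diagonally.

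First we treat derivations. If $\mathcal I_A\neq\varnothing$, Proposition~\ref{prop:der-auto-forms}(a) gives $D({\bf e}_i)={\bf 0}$ for $i\ge2$. Hence $RD({\bf e}_i)={\bf 0}$. Also $DR({\bf e}_i)=2^{n-i}\lambda D({\bf e}_i)={\bf 0}$, because $i\ge k+1\ge2$. If $\mathcal I_A=\varnothing$, then $D({\bf e}_i)=2^{i-1}\mu{\bf e}_i$ for $i\ge2$. Both $RD({\bf e}_i)$ and $DR({\bf e}_i)$ then equal $2^{i-1}2^{n-i}\mu\lambda{\bf e}_i$, so the commutator vanishes. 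This is precisely the tail computation already carried out at the end of the proof of Theorem~\ref{thm:RB0-comm-der}, with $\tau_i=0$ for $i\ge2$.

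Next we treat automorphisms. For $2\le i<n$, Proposition~\ref{prop:der-auto-forms}(b) gives $\phi({\bf e}_i)=\delta_i{\bf e}_i$ with no sink term. Therefore $R\phi({\bf e}_i)=\delta_i2^{n-i}\lambda{\bf e}_i=\phi R({\bf e}_i)$. This again agrees with the last step of the proof of Theorem~\ref{thm:RB0-comm-auto}.

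The only point needing care is that $i\ge2$ on the tail, which guarantees that the sink coefficients $\nu$ and $\gamma$, present only at ${\bf e}_1$, never enter. This holds because $k\ge i_0+1\ge1$ and $i>k$. We also use that $D$ and $\phi$ preserve each line $\K{\bf e}_i$ for $i\ge2$, so there is no mixing with ${\bf e}_n$; this is exactly the reduced normal form assumption.

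For the consequence, we observe that $[R,D]({\bf e}_n)=0$ and $[R,\phi]({\bf e}_n)=0$ hold automatically. This is because $D({\bf e}_n)$ and $\phi({\bf e}_n)$ lie in $\K{\bf e}_n$, on which $R$ acts by the scalar $\lambda$, as noted in the earlier proofs. Since a linear map vanishes if and only if it vanishes on the basis, $[R,D]=0$ or $[R,\phi]=0$ reduces to the equations at the indices $i\le k$. These are exactly \eqref{eq:comm-der-before}--\eqref{eq:comm-der-threshold} and \eqref{eq:comm-auto-before}--\eqref{eq:comm-auto-threshold}.
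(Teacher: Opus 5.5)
Your proof is correct and is essentially the paper's argument: for tail indices $i\ge k+1\ge 2$, Proposition~\ref{prop:der-auto-forms} gives no sink terms, so $R$, $D$ and $\phi$ all act diagonally on ${\bf e}_i$ and therefore commute there; this is exactly the tail step in the proofs of Theorems~\ref{thm:RB0-comm-der} and~\ref{thm:RB0-comm-auto}, which the paper cites. You also check explicitly that ${\bf e}_n$ imposes no condition, because $R$ acts by the scalar $\lambda$ on $\K{\bf e}_n$ and $D$ and $\phi$ preserve that line; this is a small but welcome addition, since the paper leaves that part of the ``consequently'' clause implicit.
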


\begin{proof}
For \(\mathcal I_A\neq\varnothing\), Proposition~\ref{prop:der-auto-forms} gives no
nonzero derivation sink terms on tail vertices and no automorphism sink terms except
possibly at \({\bf e}_1\).  For \(\mathcal I_A=\varnothing\), the same proposition gives
\(\tau_i=0\) for every \(i\ge2\).  Since an interior tail vertex satisfies \(i>k\ge1\),
Theorems~\ref{thm:RB0-comm-der} and~\ref{thm:RB0-comm-auto} give the displayed
vanishing.
\end{proof}

The graph therefore records two independent restrictions.  First, the Rota--Baxter
threshold cannot start before the last branching source \(i_0\).  Second, once the
threshold has been chosen, the geometric tail is already compatible with all reduced
normal-form derivations and automorphisms; only the vertices before the threshold and
the threshold vertex itself can impose coefficient or resonance conditions.

\section{Conclusion}

We have described local Rota--Baxter operators of weight $0$ on nilpotent evolution algebras of maximal nilindex. The classification shows that the diagonal rigidity known from the ordinary weight-zero theory remains present in the local setting: the nonzero diagonal coefficients, when they occur, must begin after the obstruction index $i_0$ and form a final geometric string determined by one scalar. The main difference is that the last-column coefficients become unrestricted. This explains precisely why the local class is generally larger than the ordinary class.

We also showed that the $s$-local version of the notion does not produce additional operators. Namely, every $s$-local Rota--Baxter operator of weight $0$ is already local. Moreover, the classification admits a natural parameter-space interpretation: $\LRB_0(\E)$ decomposes into an affine annihilator-valued stratum and quasi-affine strata corresponding to the first index at which the diagonal string begins. This gives the dimension comparison
\[
\dim \LRB_0(\E)=n+1,
\qquad
\dim \RB_0(\E)=n.
\]

The commutant results further show how ordinary weight-zero Rota--Baxter operators interact with derivations and automorphisms of $\E$. In the reduced normal form used in this paper, the only possible sink drift in derivations and automorphisms comes from the first basis vector. Consequently, the commutator conditions reduce to explicit threshold equations and, in the automorphism case, to spectral resonance conditions. The associated directed graph records why the threshold must occur after the last branching source and why the geometric tail is automatically compatible with the reduced normal-form derivations and automorphisms.

Several natural questions remain open. One may study local and $2$-local Rota--Baxter operators of nonzero weight on the same class of algebras, or extend the analysis to broader families of nilpotent evolution algebras where the graph structure is less rigid. Another possible direction is to investigate whether the commutant conditions obtained here can be related more systematically to graph invariants, derivation structure, or automorphism orbits of evolution algebras.

\section*{Statements and Declarations}

\subsection*{Funding.}
The second author (F.M.) acknowledges the UAEU UPAR Grant No.
G00004962. 

\subsection*{Author contributions} All authors contributed equally.

\subsection*{Data availability.}
No external datasets were used. The numerical values reported in the manuscript are generated by deterministic iteration and root-finding applied to the displayed cavity equations.

\subsection*{Competing interests.}
The author declares no competing interests.

\end{document}